\let\le\leqslant
\let\ge\geqslant
\let\ot\otimes
\let\mc\mathcal
\DeclareMathOperator{\id}{id}
\DeclareMathOperator{\im}{Im}
\DeclareMathOperator{\GL}{GL}
\DeclareMathOperator{\SL}{SL}
\DeclareMathOperator{\pr}{pr}
\DeclareMathOperator{\inn}{in}
\DeclareMathOperator{\FF}{\textbf{F}}
\DeclareMathOperator{\GG}{\textbf{G}}
\DeclareMathOperator{\PD}{\textbf{PD}}
\DeclareMathOperator{\Rmod}{\textbf{R-mod}}
\DeclareMathOperator{\Mat}{Mat}
\DeclareMathOperator{\diag}{diag}
\DeclareMathOperator{\CC}{\mathcal{C}}
\DeclareMathOperator{\DD}{\mathcal{D}}
\theoremstyle{definition}
\newtheorem{lemma}{Lemma}[section]
\newtheorem{theorem}[lemma]{Theorem}
\newtheorem{remark}[lemma]{Remark}
\newtheorem{corollary}[lemma]{Corollary}
\newtheorem{example}[lemma]{Example}
\newtheorem{proposition}[lemma]{Proposition}
\newtheorem{definition}[lemma]{Definition}
\begin{document}

\author{Alexandr Grebennikov}
\title{Non-surjective Milnor patching diagrams}

\maketitle

\begin{abstract}
\setlength\parindent{0pt}
Milnor patching diagram is essentially the commutative square of rings, over which gluing of finitely generated projective modules is possible in the strongest sense. Necessary and sufficient conditions for a square to be Milnor patching diagram were studied by Milnor, Beauville-Laszlo and Landsburg. We relate this question to determinant-induced factorization in matrix rings to construct a series of non-surjective Milnor patching diagrams, settling the question of Landsburg, and make a step towards the classification of such examples. Also we consider a possible generalization of the notion of Milnor patching diagram to arbitrary subcategories of modules and obtain a classification result for this setting.
\end{abstract}

\section{Introduction}
The central object of study in this work is a commutative square $\mc{F}$ in the category of rings (commutative, unital):
\begin{equation}
\label{fig:R1R2S} 
    \mc{F} = \vcenter{\xymatrix{R \ar[r]^{g_1} \ar[d]^{g_2} & R_1 \ar[d]^{h_1} \\ R_2 \ar[r]^{h_2} & S}}.
\end{equation}
A wide range of properties of such squares were studied by Nashier and Nickols in \cite{NN}. 

\begin{definition}
\label{def:car_sur}
    We call a square \textit{cartesian} if the following sequence of $R$-modules is exact:
    $$
        \xymatrix{0 \ar[r] & R \ar[r]^{\varphi \;\;\;\;\;} & R_1 \oplus R_2 \ar[r]^{\;\;\;\;\; \psi} & S},
    $$
    $$
        \text{ where } \quad \varphi: r \mapsto (g_1(r), g_2(r)), \quad \psi: (r_1, r_2) \mapsto h_1(r_1) - h_2(r_2).
    $$
    We call a square \textit{surjective} if the map $\psi$ is surjective.
\end{definition}

\begin{definition}
    For a commutative square $\mc{F}$ we define the corresponding \textit{category of patching data} $\PD(\mc{F})$:
    \begin{itemize}
        \item Objects: triples $(M_1, M_2, \zeta)$, $M_1$ --- $R_1$-module, $M_2$ --- $R_2$-module, $\zeta: M_1 \ot_{R_1} S \to M_2 \ot_{R_2} S$ --- isomorphism of $S$-modules;
        \item Morphisms between $(M_1, M_2, \zeta)$ and $(M'_1, M'_2, \zeta')$: pairs of $R_i$-linear maps $(g_1, g_2)$, $g_i: M_i \to M'_i$, such that the following diagram commutes:
        $$
            \vcenter{\xymatrix{M_1 \ot_{R_1} S \ar[d]^{\zeta} \ar[r]^{g_1 \ot 1} & M'_1 \ot_{R_1} S \ar[d]^{\zeta'} \\ M_2 \ot_{R_2} S \ar[r]^{g_2 \ot 1} & M'_2 \ot_{R_2} S}}.
        $$
    \end{itemize}

    We say that an object $(M_1, M_2, \zeta) \in \PD(\mc{F})$ has some property $P$ if both $M_1$ and $M_2$ have this property.
    
    There is an obvious functor $\FF_{\mc{F}}: \Rmod \to \PD(\mc{F})$ which acts on objects as
    $$
        \FF_{\mc{F}}: M \mapsto (M \ot_R R_1, M \ot_R R_2, \id_{M \ot_R S}).
    $$
    
    Also define the <<patch>> functor $\GG_{\mc{F}}: \PD(\mc{F}) \to \Rmod$ as
    $$
        \GG_{\mc{F}}: (M_1, M_2, \zeta) \mapsto \{(m_1, m_2) \in M_1 \oplus M_2 : \zeta(m_1 \ot 1) = m_2 \ot 1 \}.
    $$    
\end{definition}

The following classical theorem \ref{Beauville-Laszlo} shows that under certain assumptions the functors $\FF_{\mc{F}}$ and $\GG_{\mc{F}}$ are, in some sense, mutually inverse. 
\footnote{Technically, the statement of \ref{Beauville-Laszlo} is not precisely the theorem from \cite{BL}, but some modification of it, taken from \cite{Stacks}. See section \ref{sec:appendix} (Appendix) of this work for details.}
Despite it having a nice formulation in terms of algebraic geometry as well as numerous applications in that area, for our purposes it is more convenient to stick to commutative-algebraic point of view.


\begin{theorem}[Beauville, Laszlo, \cite{BL}]
\label{Beauville-Laszlo}
    Consider a commutative square $\mc{F}_{loc}$ of the form
    \begin{equation}        
    \label{fig:localization}
        \mc{F}_{loc} = \vcenter{\xymatrix{R \ar[r] \ar[d]^{g} & R_x \ar[d]^{g_x} \\ R' \ar[r] & R'_{g(x)}}},
    \end{equation} 
    where $x \in R$ and $g(x) \in R'$ are not zero divisors (we refer to such squares as \textit{principal localization squares}). Suppose $\mc{F}_{loc}$ is cartesian and surjective. Then $\FF_{\mc{F}_{loc}}$ and $\GG_{\mc{F}_{loc}}$ induce mutually inverse equivalences between the category of $R$-modules with property $P$ and the category of patching data $\PD({\mc{F}_{loc}})$ with property $P$, if $P$ means   
    \begin{enumerate}
        \item[(a)] finitely generated projective;
        \item[(b)] with no $x$-torsion;
        \item[(c)] finitely generated with no $x$-torsion.
    \end{enumerate}
\end{theorem}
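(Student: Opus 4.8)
The plan is to establish, for each of the three choices of $P$, the four ingredients that jointly give the stated equivalence: that $\FF := \FF_{\mc{F}_{loc}}$ sends $R$-modules with $P$ to patching data with $P$, that $\GG := \GG_{\mc{F}_{loc}}$ does the reverse, and that the unit $\eta\colon\id\Rightarrow\GG\FF$ and counit $\varepsilon\colon\FF\GG\Rightarrow\id$ of the adjunction $\FF\dashv\GG$ (which comes from tensor--hom) restrict to isomorphisms on these subcategories. Write $R_1:=R_x$, $R_2:=R'$, $S:=R'_{g(x)}=R_1\otimes_R R_2$. The cartesian and surjective hypotheses say exactly that
\[
0\longrightarrow R\xrightarrow{\ \varphi\ }R_1\oplus R_2\xrightarrow{\ \psi\ }S\longrightarrow0
\]
is short exact, and I would first squeeze out of it the formal consequences that drive everything: applying the snake lemma to multiplication by $x^n$ (using that $x$ and $g(x)$ are non-zerodivisors, so $x$ is invertible on $R_1$ and on $S$) gives isomorphisms $R/x^nR\cong R_2/g(x)^nR_2$ for all $n\ge1$; hence $\Tor^R_i(R_2,R/x^nR)=0$ for $i>0$; and then the change-of-rings spectral sequence $\Tor^{R_2}_p(\Tor^R_q(M,R_2),N)\Rightarrow\Tor^R_{p+q}(M,N)$ collapses to give $\Tor^R_i(R/x^nR,N)\cong\Tor^{R_2}_i(R_2/g(x)^nR_2,N)$ for every $R_2$-module $N$ and all $i$. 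I would also note that a finite projective module over any ring has no torsion with respect to a non-zerodivisor, so property (a) implies property (c) on both sides of the equivalence; thus (a) reduces to (c) together with the single extra assertion that the $R$-module produced by $\GG$ from a triple of finite projectives is again finite projective.

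The easy half is the unit and ``$\FF$ preserves $P$''. Tensoring the displayed sequence with $M$ and using flatness of $R_1$ over $R$ gives exactness of $M\to M\otimes_R R_1\oplus M\otimes_R R_2\to M\otimes_R S\to0$, so $\GG\FF(M)$ is precisely the image of $M$; hence $\eta_M$ is always surjective, and injective as soon as $M\to M_x$ is, i.e. as soon as $M$ has no $x$-torsion --- covering (a), (b) and (c). As for $\FF$ preserving $P$: in case (a), base change preserves finite projectivity; in (b) and (c), $M\otimes_R R_1$ is a localization of $M$, hence $x$-torsion-free, while the $g(x)$-torsion of $M\otimes_R R_2$ is $\Tor^{R_2}_1(M\otimes_R R_2,R_2/g(x)R_2)$, which is a quotient of $\Tor^R_1(M,R_2/g(x)R_2)\cong\Tor^R_1(M,R/xR)=0$ by the five-term exact sequence of the change-of-rings spectral sequence; finiteness is obviously preserved.

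The substance is $\GG$ and the counit. Given a patching datum $(M_1,M_2,\zeta)$ with $P$, set $M:=\GG(M_1,M_2,\zeta)\subseteq M_1\oplus M_2$; the first projection identifies $M$ with a submodule of the $R_1$-module $M_1$ (the kernel being the $g(x)$-power-torsion of $M_2$, which vanishes in all three cases --- under (b), (c) by hypothesis, under (a) since finite projectives are $g(x)$-torsion-free), so $M$ has no $x$-torsion; this already settles the target of $\GG$ in case (b). There are canonical comparison maps $\alpha\colon M\otimes_R R_1\to M_1$ and $\beta\colon M\otimes_R R_2\to M_2$, compatible with $\zeta$, and the crux is that both are isomorphisms; once they are, $\varepsilon$ is an isomorphism by construction. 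That $\alpha$ is an isomorphism is routine denominator-clearing: since $R\to R_1=R_x$ is a localization, every element of $M\otimes_R R_1$ has the form $(m_1,m_2)\otimes x^{-n}$ for a single $(m_1,m_2)\in M$, which gives injectivity (as $\alpha$ sends it to $x^{-n}m_1$ and $x$ is invertible on $M_1$), and surjectivity follows by clearing the denominator of $\zeta(m_1\otimes1)\in(M_2)_{g(x)}$ to produce $m_2$ with $(x^nm_1,m_2)\in M$. The step where I expect essentially all of the work, and the one I regard as genuinely hard, is showing that $\beta\colon M\otimes_R R'\to M_2$ is an isomorphism: here $R\to R'$ is an arbitrary ring map rather than a localization, so $M\otimes_R R'$ has no description as a ring of fractions, and bijectivity of $\beta$ must be extracted from the displayed exact sequence tensored with $M$, the isomorphism $\alpha$ just obtained, the identification $M_1\otimes_{R_1}S\cong M_2\otimes_{R_2}S$ furnished by $\zeta$, and the $x$-adic comparison isomorphisms $R/x^nR\cong R_2/g(x)^nR_2$ and their module analogues, by checking that $\beta$ is bijective modulo every power of $g(x)$ and then passing to the limit. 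This is exactly the place where the cartesian and surjective hypotheses, beyond the bare exact sequence, do real work, and what distinguishes the Beauville--Laszlo situation from a plain localization.

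Once $\alpha$ and $\beta$ are isomorphisms one has $M\otimes_R R_i\cong M_i$, and $M$ is finitely generated --- even finitely presented --- by patching generators and relations from the two sides along the identifications $R_2/g(x)^nR_2\cong R/x^nR$ (the one nonformal ingredient being that an $x$-power-torsion $R$-module $N$ with $N\otimes_R R_1=N\otimes_R R_2=0$ is zero: $\Tor^R_1(N,S)$ vanishes, being a localization at $x$ of the $x$-power-torsion module $\Tor^R_1(N,R_2)$, so $N$ embeds into $N\otimes_R(R_1\oplus R_2)=0$). This finishes case (c). For case (a), since finitely presented flat modules are projective, it remains only to show $M$ is $R$-flat: testing $\Tor^R_1(M,R/\mathfrak m)=0$ at a maximal ideal $\mathfrak m$, the case $x\notin\mathfrak m$ reduces after localization to the flatness of $M_1$, while for $x\in\mathfrak m$ the change-of-rings spectral sequence for $R\to R/xR$ collapses (thanks to $x$-torsion-freeness of $M$) to identify $\Tor^R_1(M,R/\mathfrak m)$ with $\Tor^{R/xR}_1(M/xM,R/\mathfrak m)$, which vanishes because $M/xM\cong M_2/g(x)M_2$ is finite projective over $R/xR$. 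This completes case (a) and, with it, the theorem.
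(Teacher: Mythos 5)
Your overall architecture is reasonable and most of the individual reductions check out: the unit computation via right-exactness of the tensored sequence, the preservation of the three properties by $\FF$ (the five-term-sequence argument for the $g(x)$-torsion of $M\otimes_R R'$ is correct), the identification of $\GG(M_1,M_2,\zeta)$ with a submodule of $M_1$, the bijectivity of $\alpha$ by clearing denominators, and the deduction of finite presentation and of projectivity (via $\Tor_1^R(M,R/\mathfrak m)$ and the change-of-rings collapse) \emph{once the counit is known to be an isomorphism}. But that proviso is the whole theorem. The assertion that $\beta\colon \GG(M_1,M_2,\zeta)\otimes_R R'\to M_2$ is bijective is precisely the content of the Beauville--Laszlo descent statement, and you do not prove it: you name the ingredients you expect to use and propose to ``check that $\beta$ is bijective modulo every power of $g(x)$ and then pass to the limit.'' That strategy does not close the gap. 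What one gets cheaply (from $\alpha$, $\zeta$ and inverting $g(x)$) is that $\ker\beta$ and $\operatorname{coker}\beta$ are $g(x)$-power-torsion; knowing in addition that they die modulo every power of $g(x)$ is not enough to force them to vanish, since a $g(x)$-power-torsion module can be $g(x)$-divisible without being zero (e.g.\ $R'_{g(x)}/R'$). The genuine argument --- as in \cite{BL} or in the Stacks Project's treatment via glueable modules --- constructs a preimage of a given $m_2\in M_2$ directly, using the surjectivity of the square to split the discrepancy $\zeta^{-1}(m_2\otimes 1)$ into contributions from $M_1$ and $M_2$ after clearing denominators, plus a separate injectivity argument. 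Until that is supplied, the proof is incomplete in all three cases (a), (b), (c), since each of them routes through the counit.

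For comparison: the paper does not reprove this step either; its proof of Theorem \ref{Beauville-Laszlo} verifies that the hypotheses make $(R\to R',g)$ a glueing pair (via Proposition \ref{prop:g-iso}) and then cites the Stacks Project (tags 0BP2, 0BNW, 0BNN, 0BP6) for the equivalence on glueable modules and for the three property classes. So the hard kernel you isolated is exactly the part the paper outsources; your surrounding reductions are broadly parallel to, and in places more explicit than, what those references do, but as written the proposal leaves the theorem's core unestablished.
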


\textit{The notations for the rings and the maps in the square we used in diagrams (\ref{fig:R1R2S}) and (\ref{fig:localization}), will be also used throughout the whole paper.}

We will be interested in the inverse statement. Commutative squares satisfying the condition from the point (a) --- namely, such squares $\mc{F}$ that the functors $\FF_{\mc{F}}$ and $\GG_{\mc{F}}$ induce mutually inverse equivalences between the category of finitely generated projective $R$-modules and the category of finitely generated projective patching data --- have already received some attention in the literature under the name \textit{Milnor patching diagrams} since they were first studied by Milnor in \cite{M}. But sometimes it is more convenient to use a slightly stronger and more explicit notion introduced by Landsburg in \cite{L92} (more details in paragraph \ref{par:strong} below).
\begin{definition}
    A square $\mc{F}$ is called a \textit{strong Milnor patching diagram} if it is cartesian and for every $A_S \in \GL_n(S)$ there are $A_1 \in \GL_{2n}(R_1), A_2 \in \GL_{2n}(R_2)$ such that    
    $$
        \begin{pmatrix} A_S & 0 \\ 0 & A^{-1}_S \end{pmatrix} = h_1(A_1) h_2(A_2).
    $$
\end{definition}

Landsburg in \cite{L} established the converse to the point (a) of theorem \ref{Beauville-Laszlo} under the assumption that $R$ is Noetherian and none of its maximal localizations is a regular local ring of dimension $2$, and posed the question about whether this assumption is essential. Theorem \ref{theorem:counterex} we prove in section \ref{sec:counterexample} shows that it actually is, making use of the following definition from \cite{EM}.

\begin{definition}
\label{definition:DIF}
    A ring $R$ is said to have \textit{determinant-induced factorization} if for any $C \in \Mat_n(R)$ and a non-zero-divisor $b \in R$ dividing $\det(C)$ there are $A, B \in \Mat_n(R)$ satisfying
    $$
        \det(B) = b, \; AB = C.
    $$
\end{definition}

\begin{theorem}
\label{theorem:counterex} \;
\begin{enumerate}
    \item [1.] Suppose $R$ has determinant-induced factorization, $x, y \in R$ --- non-zero-divisors, and $y$ is a non-zero-divisor on $R/(x)$. Then the principal localization square
    $$
        \mc{L}(R, x, y) = \vcenter{\xymatrix{R \ar[r] \ar[d] & R_x \ar[d] \\ R_y \ar[r] & R_{xy}}}
    $$
    is a strong Milnor patching diagram.
    \item [2.] If additionally $x$ is a non-zero-divisor on $R/(y)$ and $(x, y) \neq R$, then $\mc{L}(R, x, y)$ is a non-surjective strong Milnor patching diagram. 
\end{enumerate}
\end{theorem}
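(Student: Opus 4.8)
The plan is to verify the two defining properties of a strong Milnor patching diagram for $\mc{L}(R,x,y)$ — that it is cartesian, and that every matrix $\diag(A_S,A_S^{-1})$ factors as $h_1(A_1)h_2(A_2)$ — and then, for part 2, to pin down one element of $R_{xy}$ outside the image of $\psi$. Throughout one uses freely that $x$, $y$, and hence $xy$, are non-zero-divisors, so that $R$ embeds into $R_x$, $R_y$ and $R_{xy}$, and that powers of $x$ become units in $R_x$ while powers of $y$ become units in $R_y$.

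\emph{Cartesianness.} Injectivity of $\varphi$ and the relation $\psi\varphi=0$ are immediate, so the only content is $\ker\psi\subseteq\im\varphi$. Given $(r/x^m,s/y^l)$ with $r/x^m=s/y^l$ in $R_{xy}$, the defining relation of $R_{xy}$ gives $x^cy^c(ry^l-sx^m)=0$ in $R$ for some $c\ge 0$; cancelling the non-zero-divisors $x^c$ and $y^c$ leaves the honest identity $ry^l=sx^m$ in $R$. A short induction shows that the hypothesis ``$y$ is a non-zero-divisor on $R/(x)$'' upgrades to ``$y$ is a non-zero-divisor on $R/(x^m)$ for every $m$''; since $ry^l\in(x^m)$, this forces $r\in(x^m)$, and writing $r=x^mt$ and cancelling $x^m$ yields $s=ty^l$, so $(r/x^m,s/y^l)=\varphi(t)$.

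\emph{The factorization, via determinant-induced factorization.} This is the heart of the matter. Given $A_S\in\GL_n(R_{xy})$, clear denominators of $A_S$ and of $A_S^{-1}$ with one common power of $xy$: $A_S=(xy)^{-M}D$ and $A_S^{-1}=(xy)^{-M}D'$ with $D,D'\in\Mat_n(R)$. Individually $\det D$ involves the unknown unit $\det A_S$, but the identity $A_SA_S^{-1}=I_n$ forces $DD'=(xy)^{2M}I_n$ in $R$, so the block matrix $\widetilde D=\diag(D,D')\in\Mat_{2n}(R)$ has the \emph{explicit} determinant $\det\widetilde D=x^{2Mn}y^{2Mn}$. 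Now $x^{2Mn}$ is a non-zero-divisor dividing $\det\widetilde D$, so determinant-induced factorization — applied to $\widetilde D^{\top}$, so that after transposing back the factor of prescribed determinant sits on the left — produces $\widetilde D=BA$ with $B,A\in\Mat_{2n}(R)$, $\det B=x^{2Mn}$ and $\det A=y^{2Mn}$. Hence $B\in\GL_{2n}(R_x)$ and $A\in\GL_{2n}(R_y)$; rescaling by $x^{-M}\in R_x$ and $y^{-M}\in R_y$ gives $A_1=x^{-M}B\in\SL_{2n}(R_x)$ and $A_2=y^{-M}A\in\SL_{2n}(R_y)$ with
$$h_1(A_1)h_2(A_2)=(xy)^{-M}\widetilde D=\diag\bigl((xy)^{-M}D,(xy)^{-M}D'\bigr)=\diag(A_S,A_S^{-1}),$$
which is exactly what is needed. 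Observe that this step uses only that $x,y$ are non-zero-divisors together with determinant-induced factorization — the torsion hypotheses were needed only for cartesianness.

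\emph{Part 2: non-surjectivity.} Suppose $\psi$ were surjective. Then $\tfrac1{xy}=\tfrac r{x^m}+\tfrac s{y^l}$ for some $r,s\in R$, and after enlarging $m,l$ we may take $m,l\ge 1$; clearing denominators (using $R\hookrightarrow R_{xy}$) gives $x^{m-1}y^{l-1}=ry^l+sx^m$ in $R$. Reducing modulo $(x)$ and cancelling $x$ repeatedly — legitimate because, as above, $y$ is a non-zero-divisor on every $R/(x^k)$ — strips the factor $x^{m-1}$ and leaves $y^{l-1}=r'y^l+sx$ for some $r'\in R$. Running the same reduction modulo $(y)$, now using that $x$ is a non-zero-divisor on every $R/(y^k)$, strips $y^{l-1}$ and produces $1=r''y+s''x\in(x,y)$, contradicting $(x,y)\ne R$. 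Hence $\psi$ is not surjective, and together with part 1 this shows $\mc{L}(R,x,y)$ is a non-surjective strong Milnor patching diagram. The one genuinely delicate point in the argument is the determinant-induced factorization step: one must split off exactly the power $x^{2Mn}$ so that $\det A$ comes out an exact power of $y$, and keep the $R_x$- and $R_y$-factors on the correct sides; everything else is routine non-zero-divisor bookkeeping.
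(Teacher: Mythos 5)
Your proof is correct and follows essentially the same route as the paper: clear denominators to obtain a matrix over $R$ whose determinant is an explicit power of $xy$, apply determinant-induced factorization to split it into a factor with determinant a power of $x$ (invertible over $R_x$) and one with determinant a power of $y$ (invertible over $R_y$), and for part 2 derive $1\in(x,y)$ from a hypothetical decomposition of $1/(xy)$. The only cosmetic difference is that the paper first reduces, via its (trivial) Lemma \ref{lemma:sl}, to factoring matrices in $\SL_n(R_{xy})$, whereas you work directly with the $2n\times 2n$ block matrix $\diag(A_S,A_S^{-1})$ and use $DD'=(xy)^{2M}I_n$ to pin down its determinant — the same trick in a slightly different wrapper.
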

The condition on $R, x$ and $y$ in \ref{theorem:counterex}.2 is not vacuous since we may take, for instance, $R = k[x, y]$ (more details in paragraph \ref{par:DIF} and example \ref{ex:k[]} below). 

In section \ref{sec:classification} we continue to research on how non-surjective (strong) Milnor patching diagrams may look like and make a step towards their classification in theorem \ref{theorem:strong_patching_equivalence}. 
\begin{theorem}
\label{theorem:strong_patching_equivalence}
    Consider a principal localization square
    $$
        \mc{F}_{loc} = \vcenter{\xymatrix{R \ar[r] \ar[d]^{g} & R_x \ar[d]^{g_x} \\ R' \ar[r] & R'_{g(x)}}}, 
    $$
    and suppose also that $R/(x)$ is a (nonzero) Euclidean domain. Then the following are equivalent
    \begin{enumerate}
        \item the induced map $\bar g: R/(x) \to R'/(g(x))$ is injective and identifies $R'/(g(x))$ with some subring of the fraction field $k(R/(x))$, containing $R/(x)$;
        \footnote{Equivalently, one can say that $R'/(g(x))$ is a domain, $\bar g$ is injective and induces an isomorphism $k(\bar g): k(R/(x)) \to k(R'/(g(x)))$ between the fraction fields.}
        \item $\mc{F}_{loc}$ is a strong Milnor patching diagram.
    \end{enumerate}
\end{theorem}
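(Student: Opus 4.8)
The plan is to isolate the two parts of the definition of a strong Milnor patching diagram — being cartesian, and the antidiagonal factorization over $S=R'_{g(x)}$ — and to dispose of the cartesian part uniformly. Write $t=g(x)$, $\bar R=R/(x)$, $\bar R'=R'/(t)$. First, $\mc F_{loc}$ is cartesian if and only if $\bar g$ is injective: injectivity of $\varphi$ is automatic ($x$ a non-zero-divisor gives $R\hookrightarrow R_x$), and exactness at $R_x\oplus R'$ reduces, after clearing denominators and using that $t$ is a non-zero-divisor, to the assertion that $g(a)\in(t^m)$ forces $a\in(x^m)$ for all $m\ge1$ and all $a\in R$; a descending induction on $m$, cancelling one factor of $t$ (respectively $x$) at each step, reduces this to the case $m=1$, which is exactly $\ker\bar g=0$. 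Thus both (1) and (2) entail $\bar g$ injective, and it remains to match the factorization property against ``$\bar R'$ is a domain and $\bar g$ identifies it with a ring between $\bar R$ and $k(\bar R)$''.

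For $(1)\Rightarrow(2)$, I first observe that such an intermediate ring is necessarily a localization $\bar R_W$ of $\bar R$: since $\bar R$ is Euclidean, hence a PID, any $\beta=\bar a/\bar b\in\bar R'$ with $\bar a,\bar b$ coprime satisfies $1/\bar b\in\bar R'$ via a Bézout identity, so $\bar R'=\bar R_W$ is itself a PID with Smith normal form available. Given $A_S\in\GL_n(S)$, clearing denominators writes $A_S=t^{-N}B$ with $BB'=t^{N'}I$ for some $B,B'\in\Mat_n(R')$; a determinant computation, using that $\bar R'$ is a domain, shows that either $\det B\in(R')^{\times}$ — whence $B\in\GL_n(R')$ and $\begin{pmatrix}A_S&0\\0&A_S^{-1}\end{pmatrix}=g_x(\diag(x^{-N},\dots,x^N))\cdot\begin{pmatrix}B&0\\0&B^{-1}\end{pmatrix}$ is already of the required form — or $t\mid\det B$ in $R'$. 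In the latter case I run the Smith reduction on $\bar B$ over $\bar R'=\bar R_W$, lifting the elementary row and column operations along $R'\twoheadrightarrow\bar R'$ (elementary matrices always lift) and realizing the residual diagonal unit of $\bar R_W$ as a product of a $\diag(x^{-c},x^{c},\dots)$-factor with the $g$-image of the relevant numerator and denominator coming from the $R_x$-side; this lowers the pole order $N$ at the cost of a correction that is higher order in $t$ and feeds into the next round, so iteration terminates and yields the factorization, the doubling to size $2n$ being what absorbs the leftover determinant.

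For $(2)\Rightarrow(1)$, $\bar g$ is injective by the above. Domain-ness of $\bar R'$ comes from a direct but easy use of the factorization: a relation $ab\in(t)$ in $R'$ with $a,b\notin(t)$ is turned into a matrix in some $\GL_n(S)$ whose antidiagonal factorization would be obstructed. Then, for the fraction property, given $\beta\in\bar R'$ I lift it to $b\in R'$ and apply the factorization to an explicit $A_S\in\GL_2(S)$ manufactured from $b$ and $t$ (a unipotent with entry $b/t$, or $\diag(t,1)$ twisted by it); writing $\begin{pmatrix}A_S&0\\0&A_S^{-1}\end{pmatrix}=g_x(A_1)h_2(A_2)$, multiplying out and reducing modulo $t$ — the entries of $g_x(A_1)$ lie in $g(R)[1/t]$ and those of $A_2$ in $R'$ — forces a relation $\bar g(\bar c)\,\beta=\bar g(\bar a)$ in $\bar R'$ with $\bar c\neq0$. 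Since $\bar R'$ is a domain and $\bar g$ injective, $\bar g(\bar c)$ is a non-zero-divisor, so this exhibits $\beta$ as $\bar g(\bar a)/\bar g(\bar c)$ in $k(\bar R')$, yielding $(1)$.

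The main obstacle is the reduction in the second paragraph. The matrices $g_x(A_1)$ can only ever have entries in $g(R)[1/t]$, so every step of the Smith reduction over $\bar R'$ must in fact be performed ``through $\bar g$'' — which is possible precisely because $\bar R'$ sits inside $k(\bar R)$; this is where the hypothesis, and the convenience of $\bar R$ being Euclidean rather than merely Noetherian, enters. What makes it delicate is carrying out the induction on the $t$-order with no completeness hypothesis on $R'$: each lift back to $R'$ introduces errors of higher order in $t$, and one must verify these can be absorbed into the next stage while keeping all auxiliary matrices of size $\le2n$; the antidiagonal doubling, together with careful determinant accounting, is what permits this.
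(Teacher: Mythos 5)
Your overall strategy coincides with the paper's (Smith normal form over the residue PID for $(1)\Rightarrow(2)$; manufacturing explicit matrices and reading off relations modulo $g(x)$ for $(2)\Rightarrow(1)$), but the execution has a genuine gap precisely at the point you yourself flag as ``the main obstacle''. Your reduction scheme for $(1)\Rightarrow(2)$ --- lower the pole order $N$ ``at the cost of a correction that is higher order in $t$'' and iterate --- cannot be made to terminate: without any completeness hypothesis on $R'$, an iteration that only improves the order of approximation in $t$ never produces an exact factorization, and passing to a limit is unavailable (this is exactly the difference between $R'=\widehat R$ and, say, $R'=k[x,y^{\pm 1}]$). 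The paper's lemma \ref{lemma:DIF-via-Smith} shows how to make each step \emph{exact}, with no error term: given $X\in\Mat_n(R')$ with $g(x)\mid\det X$, one multiplies $p'(X)$ by the product $T$ of all denominators so as to land in $\Mat_n(R/(x))$ (this is where the hypothesis $\bar R'\subset k(\bar R)$ enters), runs Smith over the Euclidean domain $R/(x)$ --- not over $\bar R'$ --- with elementary matrices $U_0,V_0\in E_n(R/(x))$ that lift to $U,V\in E_n(R)$, and observes that $\det p'(X)=0$ forces an entire zero row of the diagonal form; hence a full row of $g(U)Xg(V)$ is divisible by $g(x)$ and one extracts $X=g(U^{-1}\diag(x,1,\dots,1))\cdot X'$ as an identity in $\Mat_n(R')$. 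Iterating this exact extraction $mn$ times on $g(x)^mX$ gives $X=g_x(x^{-m}A)A'$ with both factors in $\SL_n$, and no absorption of leftover units or antidiagonal doubling is needed beyond the trivial reduction of lemma \ref{lemma:sl}. Performing Smith over $\bar R'=\bar R_W$ instead, as you propose, leaves you with diagonal units of $\bar R_W$ whose denominators cannot be realized exactly on the $R_x$-side, which is what creates your non-terminating error terms.

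In $(2)\Rightarrow(1)$ the fraction-field part of your argument is essentially the paper's (the unipotent $T'(r'/g(x))$ produces, via the columns of the factorizing matrices and the cartesian property, elements $c\in R$ with $\overline{g(c)}\,\beta\in\im\bar g$), but two points are asserted rather than proved: that some such $\bar c$ is nonzero (in the paper this follows from lemma \ref{lemma:JR=R}, i.e.\ $J(s)R'=R'$ forces the image of $J(s)$ in $Q'=R'/(g(x))\neq 0$ to generate the unit ideal), and that $\bar R'$ is a domain. The latter is not a ``direct but easy'' obstruction argument: the paper derives it from the fraction property itself, by noting that the projections $J_a,J_b$ of $J(a/g(x)),J(b/g(x))$ to the PID $Q$ are principal with generators $y_a,y_b$ whose images are invertible in $Q'$, so that $\bar g(y_a)a,\bar g(y_b)b$ lie in the domain $\im\bar g\simeq Q$ and $ab=0$ forces $a=0$ or $b=0$. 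Your order of deductions (domain-ness first, then fractions) would need that missing argument supplied.
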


This theorem together with results of Beauville-Laszlo and Landsburg allows us to complete the classification of principal localization Milnor patching diagrams in the case of $(R, M)$ being a Noetherian local ring and $x \in M \setminus M^2$. 

\begin{corollary}
\label{theorem:local}
    Suppose $R$ is a Noetherian local ring with maximal ideal $M$, $x \in M \setminus M^2$. Then a principal localization square $\mc{F}_{loc}$ is a Milnor patching diagram if and only if
    \begin{itemize}
        \item either $\bar g: R/(x) \to R'/(g(x))$ is a isomorphism;
        \item or $R$ is a regular local ring of dimension $2$, $R'/(g(x))$ is isomorphic to the fraction field $k(R/(x))$, and $\bar g: R/(x) \to R'/(g(x))$ is the standard inclusion. 
    \end{itemize}
\end{corollary}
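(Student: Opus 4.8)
The plan is to combine the three results already at our disposal --- the Beauville--Laszlo theorem \ref{Beauville-Laszlo}, the converse to it established by Landsburg in \cite{L}, and Theorem \ref{theorem:strong_patching_equivalence} --- organising the argument by whether or not $R$ is a regular local ring of dimension $2$. Two elementary reductions underlie everything. First, for a principal localization square (so that $x$ and $g(x)$ are non-zero-divisors) a short diagram chase in $0\to R\xrightarrow{\varphi}R_x\oplus R'\xrightarrow{\psi}R'_{g(x)}$ shows that $\mc{F}_{loc}$ is cartesian \emph{and} surjective precisely when $\bar g\colon R/(x)\to R'/(g(x))$ is an isomorphism: cartesianness forces $\bar g$ injective because $x$ is a non-zero-divisor, surjectivity of $\psi$ is equivalent to that of $\bar g$ by successive approximation modulo the powers of $g(x)$, and the reverse implication is immediate. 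Second, if $R$ is regular local of dimension $2$ then, since $x\in M\setminus M^2$ is part of a regular system of parameters, $R/(x)$ is a one-dimensional regular local ring, hence a discrete valuation ring and in particular a nonzero Euclidean domain to which Theorem \ref{theorem:strong_patching_equivalence} applies; moreover a discrete valuation ring is a maximal proper subring of its own fraction field (any element outside it has negative valuation and hence generates the whole field), so the only rings $T$ with $R/(x)\subseteq T\subseteq k(R/(x))$ are $R/(x)$ and $k(R/(x))$ themselves.

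For the implication ``the displayed conditions $\Rightarrow$ Milnor patching diagram'': in the first alternative $\bar g$ is an isomorphism, so by the first reduction $\mc{F}_{loc}$ is cartesian and surjective and Theorem \ref{Beauville-Laszlo}(a) shows $\FF_{\mc{F}_{loc}}$ and $\GG_{\mc{F}_{loc}}$ are mutually inverse equivalences on finitely generated projectives, i.e.\ $\mc{F}_{loc}$ is a Milnor patching diagram. In the second alternative $R/(x)$ is a discrete valuation ring, $R'/(g(x))\cong k(R/(x))$ is a subring of $k(R/(x))$ containing $R/(x)$, and $\bar g$ is the standard inclusion, so condition (1) of Theorem \ref{theorem:strong_patching_equivalence} holds; hence $\mc{F}_{loc}$ is a strong Milnor patching diagram, in particular a Milnor patching diagram. (This alternative is not vacuous: taking $R'=R_y$ for a second member $y$ of a regular system of parameters produces such a square.)

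For the converse, suppose $\mc{F}_{loc}$ is a Milnor patching diagram. If $R$ is not a regular local ring of dimension $2$, then --- $R$ being Noetherian local, so its only maximal localization is $R$ itself --- the converse to Theorem \ref{Beauville-Laszlo}(a) from \cite{L} applies and gives that $\mc{F}_{loc}$ is cartesian and surjective; by the first reduction $\bar g$ is an isomorphism, which is the first alternative. If instead $R$ is regular local of dimension $2$, then $R/(x)$ is a discrete valuation ring and Theorem \ref{theorem:strong_patching_equivalence} is applicable; using the comparison between Milnor and strong Milnor patching diagrams recorded in \S\ref{par:strong}, condition (1) of that theorem holds, so $\bar g$ is injective and identifies $R'/(g(x))$ with a ring $T$ satisfying $R/(x)\subseteq T\subseteq k(R/(x))$. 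By the second reduction either $T=R/(x)$, giving the first alternative, or $T=k(R/(x))$, giving the second with $\bar g$ the standard inclusion. This exhausts the cases.

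The one genuinely delicate point is the regular-local-dimension-$2$ subcase of the converse: it is precisely the situation excluded by Landsburg's hypothesis, and indeed Theorem \ref{theorem:counterex} shows that a Milnor patching diagram there need not be surjective, so the argument cannot be routed through Beauville--Laszlo and must instead use Theorem \ref{theorem:strong_patching_equivalence}. Doing so hinges on knowing that a principal localization square which is a Milnor patching diagram is in fact a strong Milnor patching diagram; establishing --- or correctly citing from \S\ref{par:strong} --- that equivalence is the crux, after which the trivial classification of the intermediate rings of the fraction field of a discrete valuation ring finishes the proof immediately.
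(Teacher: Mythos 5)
Your proposal is correct and follows essentially the same route as the paper: split on whether $R$ is regular local of dimension $2$, handle the non-regular case via Landsburg's theorem together with the equivalence ``cartesian and surjective $\Leftrightarrow$ $\bar g$ is an isomorphism'' and Beauville--Laszlo, and handle the regular dimension-$2$ case via Theorem \ref{theorem:strong_patching_equivalence} after upgrading Milnor patching to strong Milnor patching. The ``crux'' you flag is exactly Lemma \ref{lemma:strong-usual}, which applies here because $R$ is local and hence all finitely generated projective $R$-modules are free; with that citation made explicit, your argument matches the paper's.
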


Section \ref{sec:other} is devoted to the conditions from points (b) and (c) of theorem \ref{Beauville-Laszlo} for a square $\mc{F}$, and, more generally, to the condition <<$\FF_{\mc{F}}$ and $\GG_{\mc{F}}$ induce mutually inverse equivalences between the categories $\mc{C}$ (full subcategory of $\Rmod$) and $\mc{D}$ (full subcategory of $\PD(\mc{F})$)>>. We prove the following theorem \ref{theorem:general_categories} and obtain converse statements to the points (b) and (c) as its consequence (corollary \ref{cor:f-torsion_fin-gen}).
\begin{theorem}
\label{theorem:general_categories}
    Assume $\mc{F}$ is a commutative square, $\mc{C}$ and $\mc{D}$ --- full subcategories of $\Rmod$ and $\PD(\mc{F})$ respectively,
    $\FF_{\mc{F}}$ and $\GG_{\mc{F}}$ induce mutually inverse equivalences between $\mc{C}$ and $\mc{D}$.
    Suppose $R \in \mc{C}$, $(R^2_1, R^2_2, X) \in \mc{D}$ for any $X \in \GL_2(S)$, and also $I \in \mc{C}$ for any ideal $I \subset R$, which is a quotient of some $M \in \mc{C}$. Then $\mc{F}$ is cartesian and surjective.
\end{theorem}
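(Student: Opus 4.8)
\medskip
\noindent\emph{Proof proposal.}
The plan is to push a one–parameter family of patching data built from elementary matrices through the equivalence, and to read off both conclusions from comparison maps that the equivalence is forced to turn into isomorphisms. The first thing I would record is that $\GG_{\mc F}$ is right adjoint to $\FF_{\mc F}$ via the tensor--hom adjunction: a morphism $\FF_{\mc F}(M)\to(D_1,D_2,\delta)$ is the same as a pair $(\tilde f_1,\tilde f_2)\colon M\to D_1\oplus D_2$ landing inside $\GG_{\mc F}(D_1,D_2,\delta)$, the unit is $\eta_M\colon m\mapsto(m\ot 1,m\ot 1)$, and the counit $\epsilon_D$ has components $\epsilon^i\colon m\ot r_i\mapsto r_i\cdot p_i(m)$, where $p_i\colon\GG_{\mc F}(D)\to D_i$ are the projections from the fibre product. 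Since $\mc C$ and $\mc D$ are \emph{full}, this restricts to an adjunction $\FF_{\mc F}|_{\mc C}\dashv\GG_{\mc F}|_{\mc D}$ in which the left adjoint is an equivalence; hence (a left adjoint which is an equivalence has invertible unit and counit) $\eta_M$ is an isomorphism for every $M\in\mc C$, and $\epsilon^1,\epsilon^2$ are isomorphisms for every $D\in\mc D$. This is the precise content I would attach to the phrase ``induce mutually inverse equivalences''. Cartesianness then drops out at once: applying this to $R\in\mc C$, the map $\eta_R$ is an isomorphism, and a direct computation identifies $\GG_{\mc F}\FF_{\mc F}(R)$ with the submodule $\{(r_1,r_2)\in R_1\oplus R_2:h_1(r_1)=h_2(r_2)\}=\ker\psi$ of $R_1\oplus R_2$ and identifies $\eta_R$ with $\varphi$, so $\varphi$ is injective with image $\ker\psi$.

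For surjectivity I would fix $s\in S$, set $X_s=\left(\begin{smallmatrix}1 & s\\ 0 & 1\end{smallmatrix}\right)\in\GL_2(S)$, and consider $N_s:=\GG_{\mc F}\bigl(R_1^2,R_2^2,X_s\bigr)$, which lies in $\mc C$ by hypothesis. Writing $\theta:=h_1g_1=h_2g_2\colon R\to S$, one unwinds
$$N_s=\{((a,b),(c,d))\in R_1^2\oplus R_2^2:\ h_1(a)+sh_1(b)=h_2(c),\ h_1(b)=h_2(d)\}.$$
By cartesianness the relation $h_1(b)=h_2(d)$ says $(b,d)=\varphi(\rho)$ for a unique $\rho\in R$, and then the first relation gives $s\theta(\rho)=h_2(c)-h_1(a)\in\im\psi$. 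Thus $q\colon N_s\to J_s$, $((a,b),(c,d))\mapsto\rho$, is a surjection onto $J_s:=\{\rho\in R:s\theta(\rho)\in\im\psi\}$, and $J_s$ is an \emph{ideal} of $R$ because $\theta(r)\in\im h_1\cap\im h_2$ gives $\theta(r)\cdot\im\psi\subseteq\im\psi$. Its kernel is $\{((a,0),(c,0)):h_1(a)=h_2(c)\}\cong\ker\psi\cong R$, so there is a short exact sequence $0\to R\xrightarrow{\kappa}N_s\xrightarrow{q}J_s\to 0$ with $\kappa(\lambda)=((g_1(\lambda),0),(g_2(\lambda),0))$. Since $J_s$ is an ideal of $R$ realised as a quotient of $N_s\in\mc C$, the third hypothesis puts $J_s$ in $\mc C$.

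Now I would apply $-\ot_R R_1$ to this sequence and use $\epsilon^1\colon N_s\ot_R R_1\xrightarrow{\ \sim\ }R_1^2$: it carries $\kappa\ot\id_{R_1}$ to the first–coordinate inclusion $R_1\hookrightarrow R_1^2$, so the \emph{natural} map $J_s\ot_R R_1\to R_1$, $j\ot r_1\mapsto g_1(j)r_1$, gets identified with the isomorphism $R_1^2/(R_1\oplus 0)\xrightarrow{\sim}R_1$, hence is an isomorphism; likewise $J_s\ot_R R_2\xrightarrow{\sim}R_2$ and $J_s\ot_R S\xrightarrow{\sim}S$. Feeding these natural isomorphisms into the definition of $\GG_{\mc F}\FF_{\mc F}(J_s)$ identifies it with $\{(r_1,r_2)\in R_1\oplus R_2:h_1(r_1)=h_2(r_2)\}=\varphi(R)$, and under these identifications $\eta_{J_s}$ becomes the inclusion $J_s\hookrightarrow R$. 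Since $\eta_{J_s}$ is an isomorphism, $J_s=R$; taking $\rho=1$ gives $s=s\theta(1)\in\im\psi$. As $s\in S$ was arbitrary, $\psi$ is surjective.

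The substance is entirely in the last two steps and amounts to careful diagram chasing; I expect the one genuinely load-bearing point to be that the \emph{natural} map $J_s\ot_R R_1\to R_1$ — a priori only surjective onto the extended ideal $J_sR_1$ — is forced to be an isomorphism. That forcing uses that $\epsilon^1$ is the \emph{canonical} counit component and therefore sends $\kappa\ot\id_{R_1}$ to the honest coordinate inclusion, which is exactly why the adjunction remark at the start is needed rather than a bare ``equivalence'' hypothesis. The mirror-image subtlety for the unit is that, after the identifications, $\eta_{J_s}$ must come out as literally $J_s\hookrightarrow R$; that is what upgrades ``$J_s\in\mc C$'' to ``$J_s=R$'', and hence to surjectivity of $\psi$.
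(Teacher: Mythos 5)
Your proof is correct, and while it follows the paper's skeleton for most of the way --- cartesianness from $R \in \mc{C}$, the ideal $J(s)$, the module $N_s = \GG_{\mc{F}}(R_1^2, R_2^2, T(s))$, the short exact sequence $0 \to R \to N_s \to J(s) \to 0$, and the induced isomorphisms $J(s) \ot_R R_i \to R_i$ --- your endgame is genuinely different and shorter. The paper only extracts from the equivalence that $\FF_{\mc{F}}(J) \cong \FF_{\mc{F}}(R)$ in $\PD(\mc{F})$, hence that $J \cong R$ \emph{abstractly}; it must then split the sequence to get $N \cong R^2$, deduce $(R_1^2, R_2^2, T(s)) \cong (R_1^2, R_2^2, \id_{S^2})$, and invoke its lemma on the ideals $J(s)$ (lemma \ref{lemma:JR=R}) plus a cartesianness argument to show the generator of $J$ is a unit. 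You instead observe that $\FF_{\mc{F}} \dashv \GG_{\mc{F}}$ with explicit unit and counit, that mutually inverse equivalences between \emph{full} subcategories force the unit and counit to be isomorphisms there (fully faithful left adjoint $\Leftrightarrow$ invertible unit, and dually), and that after the canonical identifications $\eta_{J_s}$ \emph{is} the inclusion $J_s \hookrightarrow R$, so $J_s = R$ outright. This buys you a cleaner argument that bypasses lemma \ref{lemma:JR=R} and the splitting entirely (and also shortens the cartesianness step, where the paper's lemma \ref{lemma:R_in_C} needs a small ring-theoretic trick precisely because it does not assume the isomorphism $R \cong \GG_{\mc{F}}\FF_{\mc{F}}(R)$ is the unit $\varphi$); the price is the extra categorical input identifying the given natural isomorphisms with the unit and counit of the tensor--hom adjunction, which you correctly flag as the load-bearing point and which is indeed justified by the hypothesis. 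All the concrete verifications (that $J_s$ is an ideal, that $\epsilon^1$ carries $\kappa \ot \id_{R_1}$ to the first-coordinate inclusion, and that the comparison maps $J_s \ot_R R_i \to R_i$ and $J_s \ot_R S \to S$ intertwine correctly with $h_1, h_2$) check out.
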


\noindent\textbf{Acknowledgements.}
The work is based on the author's undergraduate thesis defended at Saint-Petersburg State University in 2022. The author is thankful to his advisor Anastasia Stavrova for bringing attention to this problem and helpful discussions.

\section{Preliminaries}
\label{sec:preliminaries}

\subsection{Rings with determinant-induced factorization}
\label{par:DIF}

The work \cite{EM} of Estes and Matijevic we are following in this paragraph is devoted to the study of \textit{Towber rings} (see \cite{LG}, \cite{T} for more information on the subject and \cite{Sm} for a survey on related questions). Theorem 1 of \cite{EM} provides a few equivalent definitions of them, the version (i) being the most convenient for our purposes.
\begin{definition}
    A reduced Noetherian ring $R$ is called \textit{Towber} if for any $C_1, \ldots, C_k \in \Mat_n(R)$ and a non-zero-divisor $b \in R$, such that $b \mid \det(\sum_{i = 1}^k X_i C_i)$ for any $X_i \in \Mat_n(R)$, there are $A_1, \ldots, A_k, B \in \Mat_n(R)$ satisfying
    $$
        \det(B) = b, \; A_i B = C_i.
    $$
\end{definition}

Clearly, any Towber ring has determinant-induced factorization in the sense of definition \ref{definition:DIF} by taking $k = 1$. Moreover, proposition 5 of \cite{EM} shows that the converse also holds when $R$ is a direct product of Krull domains.

Corollary 2 of \cite[section 3]{EM} says that a factorial Noetherian ring is Towber if and only if its global dimension is at most 2 and all finitely generated projective modules over it are free. So, polynomial ring $k[x, y]$ for a field $k$ and regular local rings of dimension 2 are the first non-trivial examples of Towber (and thus having determinant-induced factorization) rings.

\subsection{Properties of principal localization squares}
\label{par:technical}
Here we record two basic statements about principal localization squares. Their proofs are quite straightforward (and partially follow from Proposition 4.2 of \cite{NN} and Proposition 2.1 of \cite{Bh}), thus postponed until section \ref{sec:appendix} (Appendix).
\begin{proposition}
\label{prop:g-injective}
    For a principal localization square $\mc{F}_{loc}$ the following are equivalent:
    \begin{enumerate}
        \item $\mc{F}_{loc}$ is cartesian;
        \item the induced map $\bar g: R/(x) \to R'/(g(x))$ is injective.
    \end{enumerate}
\end{proposition}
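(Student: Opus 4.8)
The plan is to reduce the cartesianness of $\mc{F}_{loc}$ to a single divisibility statement about $g$ and then match that statement with injectivity of $\bar g$. First I would note that the map $\varphi\colon r \mapsto (r/1, g(r))$ is automatically injective: since $x$ is a non-zero-divisor, the localization map $R \to R_x$ is injective, so $\varphi(r) = 0$ forces $r = 0$; similarly the inclusion $\im\varphi \subseteq \ker\psi$ is immediate from commutativity of the square. Hence ``$\mc{F}_{loc}$ is cartesian'' is equivalent to the single condition $\ker\psi \subseteq \im\varphi$, and this is what I would compare with injectivity of $\bar g$.

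For the implication (1) $\Rightarrow$ (2): assume $\mc{F}_{loc}$ is cartesian and let $r \in R$ satisfy $\bar g(\bar r) = 0$, i.e. $g(r) = g(x)\,b'$ for some $b' \in R'$. Form the pair $(r/x, b') \in R_x \oplus R'$. Using $g(r) = g(x)b'$, a direct computation shows that both components map to $b'/1$ in $R'_{g(x)}$, so $(r/x, b') \in \ker\psi = \im\varphi$. Thus there is $s \in R$ with $s/1 = r/x$ in $R_x$ and $g(s) = b'$ in $R'$; clearing denominators in $R_x$ and cancelling the power of the non-zero-divisor $x$ gives $sx = r$ in $R$, whence $\bar r = 0$ and $\bar g$ is injective.

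For (2) $\Rightarrow$ (1): assume $\bar g$ is injective and take $(a, b) \in \ker\psi$. Write $a = r_0/x^n$; because $g(x)$ is a non-zero-divisor, the relation $g(r_0)/g(x)^n = b$ in $R'_{g(x)}$ upgrades to the honest equality $g(r_0) = g(x)^n b$ in $R'$. Now induct on $n$. For $n = 0$ the element $r_0 \in R$ itself maps to $(a, b)$. For $n \ge 1$, the equation $g(r_0) = g(x)^n b$ shows $g(x) \mid g(r_0)$, so $\bar g(\bar r_0) = 0$, and injectivity of $\bar g$ yields $r_0 = x r_1$ for some $r_1 \in R$; cancelling one factor of the non-zero-divisor $g(x)$ turns $g(x)g(r_1) = g(x)^n b$ into $g(r_1) = g(x)^{n-1} b$, so the pair $(r_1/x^{n-1}, b)$ again lies in $\ker\psi$ and the induction hypothesis applies, producing the desired preimage in $R$.

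I do not expect a serious obstacle: the argument is elementary. The only points needing care are the systematic use of the two non-zero-divisor hypotheses to pass from equalities in the localizations $R_x$ and $R'_{g(x)}$ to equalities in $R$ and $R'$ (these are precisely where the hypotheses on $x$ and $g(x)$ are consumed), and keeping the induction on the exponent $n$ clean, so that each descent step both lowers $n$ and preserves membership in $\ker\psi$.
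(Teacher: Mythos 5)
Your proof is correct and follows essentially the same route as the paper's: the forward direction tests the pair $(r/x,\,b')$ against cartesianness, and the converse direction is the paper's ``decrease $k$'' descent, just phrased as an explicit induction on the exponent $n$. No gaps.
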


\begin{proposition}
\label{prop:g-iso}
    For a principal localization square $\mc{F}_{loc}$ the following are equivalent:
    \begin{enumerate}
        \item $\mc{F}_{loc}$ is cartesian and surjective;
        \item the induced map $\bar g: R/(x) \to R'/(g(x))$ is an isomorphism;
        \item the induced map $\bar g_k: R/(x^k) \to R'/(g(x)^k)$ is an isomorphism for all $k \ge 1$.
    \end{enumerate}
\end{proposition}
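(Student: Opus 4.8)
The plan is to deduce everything from Proposition~\ref{prop:g-injective} together with two elementary computations inside the localizations $R_x$ and $R'_{g(x)}$. It is convenient to record first that, since $x$ and $g(x)$ are non-zero-divisors, the maps $R \hookrightarrow R_x$ and $R' \hookrightarrow R'_{g(x)}$ are injective, every element of $R_x$ has the form $r/x^m$ with $r \in R$, every element of $R'_{g(x)}$ has the form $a/g(x)^k$ with $a \in R'$, and $h_1 = g_x \colon R_x \to R'_{g(x)}$ sends $r/x^m \mapsto g(r)/g(x)^m$ (here $h_2 \colon R' \to R'_{g(x)}$ is the localization map, and $\psi(r_1,r_2) = h_1(r_1) - h_2(r_2)$ as in Definition~\ref{def:car_sur}). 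Since $(3)\Rightarrow(2)$ is trivial (take $k = 1$), it suffices to prove $(2)\Rightarrow(3)$, $(2)\Rightarrow(1)$ and $(1)\Rightarrow(2)$; the first two are routine, and the third is where the actual work lies.

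For $(2)\Rightarrow(3)$ I would induct on $k$, the base case $k=1$ being the hypothesis. For $k \ge 2$, multiplication by $x^{k-1}$ yields a short exact sequence $0 \to R/(x) \to R/(x^k) \to R/(x^{k-1}) \to 0$ (injectivity of the first map because $x^{k-1}$ is a non-zero-divisor), and likewise with $x$ and $R$ replaced by $g(x)$ and $R'$; the maps $\bar g = \bar g_1$, $\bar g_k$, $\bar g_{k-1}$ assemble into a morphism between these two sequences, commutativity of the relevant square being the identity $g(x^{k-1}r) = g(x)^{k-1}g(r)$. The five lemma then shows $\bar g_k$ is an isomorphism once $\bar g$ and $\bar g_{k-1}$ are. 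For $(2)\Rightarrow(1)$: injectivity of $\bar g$ gives cartesianness by Proposition~\ref{prop:g-injective}, so only surjectivity of $\psi$ is in question. Given $a/g(x)^k \in R'_{g(x)}$ with $a \in R'$, if $k = 0$ then $a = \psi(0,-a)$; if $k \ge 1$, surjectivity of $\bar g$ lets us write $a = g(r_0) + g(x)a_1$ with $r_0 \in R$, $a_1 \in R'$, whence $a/g(x)^k = h_1(r_0/x^k) + a_1/g(x)^{k-1}$, and one recurses on $a_1/g(x)^{k-1}$. After $k$ steps the leftover summand lies in $R' = \im h_2$, and since $\psi$ is additive with $\im\psi = \im h_1 + \im h_2$, we conclude $a/g(x)^k \in \im\psi$.

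The substantive step is $(1)\Rightarrow(2)$. Cartesianness already forces $\bar g$ to be injective (Proposition~\ref{prop:g-injective}), so I only need it to be surjective. Fix $a \in R'$ and apply surjectivity of $\psi$ to $a/g(x) \in R'_{g(x)}$: there are $r \in R$, $m \ge 0$, $b \in R'$ with $g(r)/g(x)^m - b = a/g(x)$; since $r/x^m = (xr)/x^{m+1}$ in $R_x$ we may assume $m \ge 1$, and clearing denominators gives the relation
$$
    g(r) - g(x)^m b = a\,g(x)^{m-1} \quad\text{in } R'.
$$
If $m = 1$ this says $a \equiv g(r) \pmod{g(x)R'}$, as desired. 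If $m \ge 2$, reducing modulo $g(x)R'$ gives $g(r) \equiv 0$, so injectivity of $\bar g$ forces $r = xr'$ for some $r' \in R$; substituting $g(r) = g(x)g(r')$ and cancelling one factor of $g(x)$ (a non-zero-divisor) yields $g(r') - g(x)^{m-1}b = a\,g(x)^{m-2}$, the same relation with $m$ lowered by one. Descending induction on $m$ reaches $m = 1$ and produces $a \in g(R) + g(x)R'$, so $\bar g$ is surjective, hence an isomorphism, which is~(2).

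I expect this last paragraph to be the only real obstacle: an element of $\im\psi$ is visible only as a fraction $r/x^m$ with no a priori bound on $m$, so surjectivity of $\bar g$ cannot be read off directly, and it is precisely the injectivity supplied by cartesianness that allows the superfluous powers of $x$ to be stripped off one at a time.
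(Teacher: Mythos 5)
Your proof is correct. The implication cycle you choose differs slightly from the paper's (which proves $(1)\Rightarrow(2)\Rightarrow(3)\Rightarrow(1)$), but the substance of the hard direction $(1)\Rightarrow(2)$ is the same: the paper applies surjectivity to $r'/g(x)$, clears one denominator, and then invokes cartesianness as a black box to conclude that an element lying simultaneously in $R'$ and in $g_x(R_x)$ comes from $R$; your descending induction on $m$, stripping one factor of $x$ at a time via injectivity of $\bar g$, is exactly the argument hidden inside the paper's proof of Proposition~\ref{prop:g-injective}, unfolded inline. Where you genuinely diverge is $(2)\Rightarrow(3)$: the paper does a bare-hands induction whose surjectivity step applies the inductive hypothesis twice and exploits $2k-2\ge k$, whereas you apply the short five lemma to the morphism of exact sequences $0 \to R/(x) \to R/(x^k) \to R/(x^{k-1}) \to 0$ induced by multiplication by $x^{k-1}$ (using that $x^{k-1}$ and $g(x)^{k-1}$ are non-zero-divisors). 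Your version is cleaner and makes transparent why both injectivity and surjectivity propagate; the paper's is more elementary in that it avoids any homological machinery. Your $(2)\Rightarrow(1)$ is likewise a mild repackaging of the paper's $(3)\Rightarrow(1)$, replacing a single application of surjectivity of $\bar g_k$ by $k$ iterations of surjectivity of $\bar g_1$. No gaps.
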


\subsection{Milnor patching diagrams}
The following basic proposition is not hard to prove (and it may be viewed as a special case of lemma \ref{lemma:R_in_C} we prove later).
\begin{proposition}
\label{prop:cartesian}
    Any Milnor patching diagram is a cartesian square.
\end{proposition}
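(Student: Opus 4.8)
The plan is to feed the rank-one free module $R$ into the equivalence. The first step is to make explicit that $\FF_{\mc{F}}$ is left adjoint to $\GG_{\mc{F}}$: for any $R$-module $M$ and any object $(M_1, M_2, \zeta) \in \PD(\mc{F})$ there is a natural bijection
$$
\operatorname{Hom}_{\PD(\mc{F})}\bigl(\FF_{\mc{F}}(M),\,(M_1,M_2,\zeta)\bigr) \;\cong\; \operatorname{Hom}_R\bigl(M,\,\GG_{\mc{F}}(M_1,M_2,\zeta)\bigr).
$$
Indeed, by the tensor--hom adjunction for $g_i \colon R \to R_i$ an $R_i$-linear map $M \otimes_R R_i \to M_i$ is the same datum as an $R$-linear map $\tilde f_i \colon M \to M_i$, and after identifying $(M \otimes_R R_i) \otimes_{R_i} S$ with $M \otimes_R S$ the commuting-square condition defining a morphism $\FF_{\mc{F}}(M) \to (M_1, M_2, \zeta)$ becomes $\zeta(\tilde f_1(m) \otimes 1) = \tilde f_2(m) \otimes 1$ for all $m \in M$; this is exactly the requirement that $m \mapsto (\tilde f_1(m), \tilde f_2(m))$ be a well-defined $R$-linear map $M \to \GG_{\mc{F}}(M_1, M_2, \zeta)$. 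Tracing $\id_{\FF_{\mc{F}}(M)}$ through this bijection shows that the unit of the adjunction is $\eta_M \colon M \to \GG_{\mc{F}}\FF_{\mc{F}}(M)$, $m \mapsto (m \otimes 1,\, m \otimes 1)$.

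Next I would specialize to $M = R$. Under the canonical identifications $R \otimes_R R_i = R_i$ and $R \otimes_R S = S$ one has $\FF_{\mc{F}}(R) = (R_1, R_2, \id_S)$, and unwinding the definition of $\GG_{\mc{F}}$ gives
$$
\GG_{\mc{F}}\FF_{\mc{F}}(R) = \{(r_1, r_2) \in R_1 \oplus R_2 : h_1(r_1) = h_2(r_2)\} = \ker \psi,
$$
with the unit $\eta_R \colon R \to \ker\psi$ equal to $\varphi$ (its values lie in $\ker\psi$ because the square commutes). Thus $\mc{F}$ is cartesian, i.e.\ $\varphi$ is injective with image exactly $\ker\psi$, if and only if $\eta_R$ is an isomorphism.

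It remains to see that the Milnor hypothesis forces $\eta_R$ to be an isomorphism. The $R$-module $R$ is finitely generated projective, and $\FF_{\mc{F}}(R) = (R_1, R_2, \id_S)$ is a finitely generated projective patching datum (each $R_i$ is free of rank one over itself). Since $\FF_{\mc{F}}$ and $\GG_{\mc{F}}$ restrict to mutually inverse equivalences between finitely generated projective $R$-modules and finitely generated projective patching data, the restricted $\FF_{\mc{F}}$ is fully faithful, $\GG_{\mc{F}}$ sends such patching data back to finitely generated projective $R$-modules (so $\ker\psi = \GG_{\mc{F}}\FF_{\mc{F}}(R)$ is itself finitely generated projective), and the adjunction above restricts to these full subcategories without changing the relevant Hom-sets. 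For any finitely generated projective $R$-module $N$, the bijection $\operatorname{Hom}_R(N, R) \to \operatorname{Hom}_{\PD(\mc{F})}(\FF_{\mc{F}}(N), \FF_{\mc{F}}(R))$ induced by $\FF_{\mc{F}}$, followed by the adjunction bijection, equals post-composition with $\eta_R$; hence $\eta_R$ induces a bijection $\operatorname{Hom}_R(N, R) \to \operatorname{Hom}_R(N, \ker\psi)$ for all such $N$. Taking $N = \ker\psi$ and $N = R$ and applying the Yoneda argument within this subcategory (which contains both $R$ and $\ker\psi$) forces $\eta_R \colon R \to \ker\psi$ to be an isomorphism. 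Therefore $\varphi$ is injective with $\im\varphi = \ker\psi$, i.e.\ $\mc{F}$ is cartesian.

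The one delicate point I anticipate is precisely this last step: the assumption only provides an \emph{abstract} natural isomorphism $\GG_{\mc{F}}\FF_{\mc{F}} \cong \id$, whereas what must be shown is that the \emph{specific} comparison map $\eta_R = \varphi$ is invertible; closing this gap is exactly what the adjunction (together with full faithfulness of $\FF_{\mc{F}}$) is for. The remaining work — the tensor-product identifications and the computation $\GG_{\mc{F}}\FF_{\mc{F}}(R) = \ker\psi$ — is routine, and the same argument run inside an arbitrary pair $\mc{C}, \mc{D}$ with $R \in \mc{C}$ is what I would expect lemma \ref{lemma:R_in_C} to record.
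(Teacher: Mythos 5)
Your proof is correct, but it takes a genuinely different route from the paper's. The paper does not prove Proposition \ref{prop:cartesian} directly: it notes that it is a special case of Lemma \ref{lemma:R_in_C}, whose proof exploits the fact that $\ker\psi = \{(r_1,r_2) : h_1(r_1)=h_2(r_2)\}$ is a unital \emph{ring} and that the canonical map $f=\varphi$ is a ring homomorphism. Given any abstract $R$-module isomorphism $\xi\colon R \to \ker\psi$, $R$-linearity forces $\xi(r) = a\,f(r)$ with $a = \xi(1)$, and a short computation (producing $r_0, r_1$ with $af(r_0)=1$, $af(r_1)=a^2$, hence $a = f(r_1)$) shows $f$ itself is bijective. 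You instead close the gap between the abstract isomorphism $\GG_{\mc{F}}\FF_{\mc{F}} \cong \id$ and the specific map $\varphi$ by exhibiting the adjunction $\FF_{\mc{F}} \dashv \GG_{\mc{F}}$ with unit $\eta_R = \varphi$ and invoking full faithfulness of the restricted $\FF_{\mc{F}}$: post-composition with $\eta_R$ becomes a bijection $\operatorname{Hom}_R(N,R)\to\operatorname{Hom}_R(N,\ker\psi)$ for $N$ in the subcategory, and the retraction/Yoneda step finishes. Both arguments correctly identify and handle the one delicate point you flag (abstract isomorphism versus canonical comparison map). The paper's argument is shorter and needs only the single isomorphism $R\cong\GG_{\mc{F}}\FF_{\mc{F}}(R)$ --- no naturality, no full faithfulness --- but it is special to the object $R$ because it leans on the multiplicative structure of $\ker\psi$; your argument uses more of the equivalence hypothesis but is purely formal, being the standard fact that an adjunction restricting to an equivalence has invertible unit there. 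Two small remarks: taking $N=R$ alone already suffices, since post-composition with $\eta_R$ on $\operatorname{Hom}_R(R,-)\cong(-)$ is $\eta_R$ itself, so you do not actually need $\ker\psi$ to lie in the subcategory; and your expectation about Lemma \ref{lemma:R_in_C} is slightly off --- that lemma assumes only $R\in\mc{C}$, which is why the paper's ring-theoretic argument (needing no full faithfulness) is the one recorded there.
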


Also we state precisely the result of Landsburg mentioned in the introduction.
\begin{theorem}[Landsburg, Proposition 2.3 of \cite{L}]
\label{Landsburg}
    Suppose a square $\mc{F}$ is a Milnor patching diagram, $R$ is Noetherian, and no maximal localization of $R$ is a regular local ring of dimension $2$. Then $\mc{F}$ is surjective. 
\end{theorem}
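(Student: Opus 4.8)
The statement is a theorem of Landsburg (\cite[Proposition~2.3]{L}); here is how I would prove it from the definitions, arguing by contraposition. Suppose $\mc F$ is a Milnor patching diagram over a Noetherian ring $R$ but is \emph{not} surjective; the goal is to produce a maximal ideal $\mathfrak{m}$ of $R$ with $R_{\mathfrak{m}}$ a two‑dimensional regular local ring.

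\emph{Reduction to the local case.} Non-surjectivity means that the $R$-module $C := \operatorname{coker}(\psi) = S/(\im h_1 + \im h_2)$ is nonzero; this really is an $R$-module because commutativity of the square makes $\im h_1$ and $\im h_2$ into $R$-submodules of $S$. Choose $0 \neq c \in C$ and a maximal ideal $\mathfrak{m} \supseteq \Ann_R(c)$, so that $C_{\mathfrak{m}} \neq 0$. I would then localize the entire square at $\mathfrak{m}$ to obtain a commutative square $\mc F_{\mathfrak{m}}$ over $R_{\mathfrak{m}}$, and verify that $\mc F_{\mathfrak{m}}$ is again a Milnor patching diagram: over Noetherian $R$, f.g.\ projective $R_{\mathfrak{m}}$-modules are exactly localizations of f.g.\ projective $R$-modules, and every f.g.\ projective patching datum over $\mc F_{\mathfrak{m}}$ is already defined over $\mc F_f$ for some $f \notin \mathfrak{m}$, so the equivalence realized by $\FF_{\mc F}$ and $\GG_{\mc F}$ is inherited by $\mc F_{\mathfrak{m}}$. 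By exactness of localization the cokernel of the map $\psi$ attached to $\mc F_{\mathfrak{m}}$ is $C_{\mathfrak{m}} \neq 0$, so $\mc F_{\mathfrak{m}}$ is non-surjective. Hence it suffices to treat the case where $R$ is Noetherian local with maximal ideal $\mathfrak{m}$, and to deduce that $R$ is regular of dimension $2$.

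\emph{Making the equivalence explicit.} Over a local ring every f.g.\ projective module is free, so the hypotheses can be rewritten in matrix terms. Applying $\FF_{\mc F} \circ \GG_{\mc F} \simeq \id$ to the f.g.\ projective patching datum $(R_1^{\,n}, R_2^{\,n}, \zeta)$ with $\zeta \in \GL_n(S)$: the $R$-module $\GG_{\mc F}(R_1^{\,n}, R_2^{\,n}, \zeta)$ is f.g.\ projective, hence free, and comparison with $\FF_{\mc F}$ of a free module forces its rank to be $n$ and forces an identity $\zeta = h_2(\beta)\, h_1(\alpha)^{-1}$ with $\alpha \in \GL_n(R_1)$, $\beta \in \GL_n(R_2)$. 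Thus $\GL_n(S) = h_2(\GL_n(R_2)) \cdot h_1(\GL_n(R_1))$ for all $n \ge 1$; cartesianness of $\mc F$ (Proposition~\ref{prop:cartesian}) is the same as full faithfulness of $\FF_{\mc F}$ at $R$.

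\emph{Forcing ``regular of dimension $2$''.} This is the heart of the proof and the step I expect to be hardest. One must show that a Noetherian local ring carrying a non-surjective cartesian square with the $\GL_n$-factorization property above is regular of dimension exactly $2$. The factorization of every $\GL_n(S)$ is a matrix-theoretic constraint very close in spirit to determinant-induced factorization, and the plan is to feed it back into module theory: expressing a well-chosen patching datum through a presentation matrix and applying the factorization should force short projective resolutions over $R$, bounding $\operatorname{gl.dim} R \le 2$ --- precisely the numerical régime of the Estes--Matijevic characterization recalled in \S\ref{par:DIF} (factorial Noetherian rings of global dimension at most $2$ with all projectives free are exactly the Towber rings, and in particular have determinant-induced factorization). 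One then rules out global dimension $0$ and $1$, where such a square is forced to be surjective, leaving $\dim R = 2$; and a two-dimensional regular local ring is exactly what is needed to run the constructions of Theorem~\ref{theorem:counterex} in reverse. Making this last step rigorous is where Landsburg's earlier machinery from \cite{L92}, together with the structure theory of finitely generated modules over Noetherian local rings, genuinely comes in, and where most of the work resides.
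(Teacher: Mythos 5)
First, note that the paper does not prove this statement at all: it is imported verbatim from Landsburg \cite[Proposition 2.3]{L}, so there is no internal proof to match your argument against. Judged on its own terms, your proposal has a genuine gap at exactly the point you yourself flag as the heart of the matter. The set-up (contraposition, reduction to a local ring, and the reformulation $\GL_n(S) = h_2(\GL_n(R_2))\cdot h_1(\GL_n(R_1))$ coming from freeness of projectives over a local ring) is reasonable, although the claim that a localization of a Milnor patching diagram is again one is itself a nontrivial spreading-out argument that you assert rather than carry out. But the decisive implication --- that a Noetherian local ring carrying a non-surjective cartesian square with this factorization property must be regular of dimension $2$ --- is not proved; it is only announced as a plan, and the plan points in the wrong direction. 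The Estes--Matijevic/Towber machinery you invoke runs the other way in this paper: it is used in Theorem \ref{theorem:counterex} to \emph{manufacture} non-surjective diagrams over rings already known to have determinant-induced factorization, not to deduce regularity or a global-dimension bound from the existence of such a diagram.

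The actual mechanism, visible in the paper's own closely related arguments (Lemma \ref{lemma:JR=R} and the proof of Theorem \ref{theorem:general_categories}, both explicitly modelled on \cite[Proposition 2.3]{L}), is the ideal $J(s) = \{r \in R \mid h_1(g_1(r))\cdot s \in h_1(R_1) + h_2(R_2)\}$: non-surjectivity means precisely that $J(s) \neq R$ for some $s \in S$; the patching equivalence applied to the datum $(R_1^2, R_2^2, T(s))$ produces a short exact sequence $0 \to R \to N \to J(s) \to 0$ with $N = \GG_{\mc F}(R_1^2, R_2^2, T(s))$ finitely generated projective, so $J(s)$ has projective dimension at most $1$; and Lemma \ref{lemma:JR=R} gives $J(s)R_1 = R_1$ and $J(s)R_2 = R_2$. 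Localizing at a maximal ideal containing $J(s)$ and combining the resulting finite free resolution of $R_{\mathfrak{m}}/J(s)_{\mathfrak{m}}$ with the constraints coming from $J(s)R_i = R_i$ is what forces $R_{\mathfrak{m}}$ to be regular of dimension $2$. Your proposal never identifies $J(s)$ or this exact sequence, so the passage from ``non-surjective'' to ``regular local of dimension $2$'' is missing rather than merely compressed.
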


\subsection{Strong Milnor patching diagrams} 
\label{par:strong}

To check that a square is a strong Milnor patching diagram  we always use the following (trivial) lemma.

\begin{lemma}
\label{lemma:sl}
    Suppose a square $\mc{F}$ is cartesian and for every $A \in \SL_n(S)$ there are $A_1 \in \GL_n(R_1), A_2 \in \GL_n(R_2)$ such that $A = h_1(A_1) h_2(A_2)$. Then $\mc{F}$ is a strong Milnor patching diagram.
\end{lemma}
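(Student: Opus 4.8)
The plan is to get the required $2n\times 2n$ factorization directly from the hypothesis, which already supplies factorizations of \emph{all} matrices in $\SL_m(S)$ for every size $m$. First I would fix an arbitrary $A_S \in \GL_n(S)$ and form the block-diagonal matrix
\begin{equation*}
    B = \begin{pmatrix} A_S & 0 \\ 0 & A_S^{-1} \end{pmatrix} \in \GL_{2n}(S).
\end{equation*}
The one computation that matters is $\det(B) = \det(A_S)\det(A_S)^{-1} = 1$, so in fact $B \in \SL_{2n}(S)$.

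Next I would apply the hypothesis of the lemma at level $2n$ (it is stated for an unspecified $n$, hence is available in every matrix size) to the element $B$: this produces $A_1 \in \GL_{2n}(R_1)$ and $A_2 \in \GL_{2n}(R_2)$ with $B = h_1(A_1)h_2(A_2)$, which is precisely the identity demanded in the definition of a strong Milnor patching diagram. Since $\mc{F}$ is cartesian by assumption, both clauses of that definition are verified, and $\mc{F}$ is a strong Milnor patching diagram.

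There is no genuine obstacle here: the entire content of the lemma is the observation that $\diag(A_S, A_S^{-1})$ is automatically special linear, so that the ability to factor $\SL$-matrices of every size is a priori enough to factor the specific matrices appearing in the definition. The only point requiring (trivial) care is the bookkeeping of sizes, namely that one must invoke the hypothesis with $2n$ in place of $n$.
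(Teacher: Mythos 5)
Your proof is correct and is exactly the intended argument (the paper labels this lemma trivial and omits the proof): $\det\bigl(\operatorname{diag}(A_S, A_S^{-1})\bigr) = 1$, so the hypothesis applied in size $2n$ yields the factorization required by the definition. No issues.
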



The results of Milnor's work \cite[Chapter 2]{M} (see also \cite{L92} or introduction of \cite{L} for the notation more similar to the one we use here) imply that 
\begin{itemize}
    \item strong Milnor patching diagrams are actually Milnor patching diagrams;
    \item if a square $\mc{F}$ is cartesian and one of $h_1: R_1 \to S$ and $h_2: R_2 \to S$ is surjective, then $\mc{F}$ is a strong Milnor patching diagram (the assumption is stronger than just $\mc{F}$ being surjective, but in comparison to Beauville-Laszlo theorem this result does not require it to be principal localization square). 
\end{itemize}
In \cite{L92} Landsburg  conjectures that strong Milnor patching is equivalent to the usual one. The next lemma asserts that in certain cases this conjecture is straightforward to verify.

\begin{lemma}
\label{lemma:strong-usual}
    Suppose $\mc{F}$ is a Milnor patching diagram, and every finitely generated projective module over $R$ is free. Then $\mc{F}$ is a strong Milnor patching diagram.
\end{lemma}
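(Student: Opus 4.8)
The plan is to run one well-chosen patching datum through the Milnor patching equivalence, read off the matrix factorization it produces, and conclude with Lemma \ref{lemma:sl}. First I would dispose of the trivial case $S = 0$: then $\GL_n(S)$ is trivial for all $n$, the factorization clause in the definition of a strong Milnor patching diagram is vacuous, and the square is cartesian by Proposition \ref{prop:cartesian}, so there is nothing to prove. Assume henceforth $S \neq 0$; since there exist ring homomorphisms $R_1 \to S$ and $R_2 \to S$, this forces $R_1 \neq 0$ and $R_2 \neq 0$. By Proposition \ref{prop:cartesian} the square is cartesian, so by Lemma \ref{lemma:sl} it suffices to show that every $A \in \SL_n(S)$ --- in fact I will prove it for every $A \in \GL_n(S)$ --- can be written as $h_1(A_1)\,h_2(A_2)$ with $A_1 \in \GL_n(R_1)$ and $A_2 \in \GL_n(R_2)$.

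Then I would fix $A \in \GL_n(S)$ and form the patching datum $P = (R_1^n,\, R_2^n,\, \zeta)$, where under the canonical identifications $R_1^n \ot_{R_1} S \cong S^n \cong R_2^n \ot_{R_2} S$ the isomorphism $\zeta$ is multiplication by $A^{-1}$. Since $R_1^n$ and $R_2^n$ are finitely generated free, $P$ is a finitely generated projective object of $\PD(\mc F)$. Because $\FF_{\mc F}$ and $\GG_{\mc F}$ restrict to mutually inverse equivalences between finitely generated projective $R$-modules and finitely generated projective patching data, $N := \GG_{\mc F}(P)$ is a finitely generated projective $R$-module with $\FF_{\mc F}(N) \cong P$ in $\PD(\mc F)$; by hypothesis $N$ is free, say $N \cong R^m$, so $\FF_{\mc F}(R^m) = (R_1^m, R_2^m, \id) \cong P$. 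Comparing first components yields an $R_1$-module isomorphism $R_1^m \cong R_1^n$, and as the nonzero commutative ring $R_1$ has invariant basis number, $m = n$.

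It then remains to unwind this isomorphism. An isomorphism $(B_1, B_2)\colon \FF_{\mc F}(R^n) = (R_1^n, R_2^n, \id) \to P$ in $\PD(\mc F)$ consists of $B_1 \in \GL_n(R_1)$ and $B_2 \in \GL_n(R_2)$ making the defining square of $\PD(\mc F)$ commute; spelled out over $S$, with $\zeta$ multiplication by $A^{-1}$, this is the matrix identity $A^{-1}\,h_1(B_1) = h_2(B_2)$ in $\GL_n(S)$, whence $A = h_1(B_1)\,h_2(B_2)^{-1} = h_1(B_1)\,h_2(B_2^{-1})$. This is the required factorization with $A_1 = B_1$ and $A_2 = B_2^{-1}$, and Lemma \ref{lemma:sl} then shows $\mc F$ is a strong Milnor patching diagram. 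The whole argument is bookkeeping once the datum $(R_1^n, R_2^n, \cdot\, A^{-1})$ is written down; the only points that call for a little care are translating ``isomorphism in $\PD(\mc F)$'' into the identity $A^{-1} h_1(B_1) = h_2(B_2)$ (taking $\zeta$ to be multiplication by $A^{-1}$ rather than $A$ is exactly what puts the two factors in the order demanded by Lemma \ref{lemma:sl}), invoking invariant basis number to identify $m$ with $n$, and setting aside the trivial case $S = 0$. I do not expect any genuine obstacle, in keeping with the lemma being a routine verification.
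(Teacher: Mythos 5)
Your proposal is correct and follows essentially the same route as the paper: feed the patching datum $(R_1^n, R_2^n, A^{\pm 1})$ through $\GG_{\mc F}$, use freeness of the resulting finitely generated projective module to identify it with $\FF_{\mc F}(R^n)$, read off the factorization from the isomorphism in $\PD(\mc F)$, and finish with Lemma \ref{lemma:sl}. Your extra bookkeeping (the $S=0$ case, invariant basis number, and choosing $A^{-1}$ as the gluing map so the factors come out in the order $h_1(\cdot)h_2(\cdot)$) only makes explicit what the paper leaves implicit.
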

\begin{proof}
    For $A \in \SL_n(S)$ consider $N = \GG_{\mc{F}}(R_1^n, R_2^n, A)$. By Milnor patching it is a finitely generated projective $R$-module, hence free. So, $N \simeq R^m$, and
    $$
        (R_1^m, R_2^m, \id_{S^m}) = \FF_{\mc{F}}(R^m) \simeq \FF_{\mc{F}}(N) \simeq (R_1^{n}, R_2^{n}, A)
    $$ 
    in the category $\PD(\mc{F})$ of patching data, what means $n = m$ and 
    $$
        A = h_1(A_1)h_2(A_2), \text{ for some } A_1 \in \GL_{n}(R_1), \; A_2 \in \GL_{n}(R_2).
    $$
    So, we are done by lemma \ref{lemma:sl}.
\end{proof}

\section{Examples of non-surjective Milnor patching diagrams}
\label{sec:counterexample}

\begin{proof}[\textbf{Proof of theorem \ref{theorem:counterex}.1.}]
It's straightforward to check that the square $\mc{L}(R, x, y)$ is cartesian:
$$
    \frac{a}{x^{e_x}} = \frac{b}{y^{e_y}}, \; e_x, e_y \ge 1, \; x \nmid a, \; y \nmid b 
$$
implies $x \mid bx^{e_x} = ay^{e_y}$, together with $y$ being non-zero-divisor on $R/(x)$ it means $x \mid a$ --- a contradiction. 

Consider $C \in \SL_n(R_{xy})$. For sufficiently large $m$
$$
    C' = (xy)^m A \in \Mat_n(R), \; \det(C') = (xy)^{mn}.
$$
Since $R$ has determinant-induced factorization, there exist $A', B' \in \Mat_n(R)$, such that
$$
    \det(A') = x^{mn}, \; \det(B') = y^{mn}, \; A'B' = C'.
$$
Then 
$$
A = x^{-m} A' \in \Mat_n(R_x), \; B = y^{-m} B' \in \Mat_n(R_y), \; AB = C, 
$$
and we are done by \ref{lemma:sl}.
\end{proof}

\begin{proof}[\textbf{Proof of theorem \ref{theorem:counterex}.2.}]
Given \ref{theorem:counterex}.1, it suffices to check that $\frac{1}{xy} \notin R_x + R_y$. Suppose
$$
    \frac{1}{xy} = \frac{a}{x^{e_x}} + \frac{b}{y^{e_y}} = \frac{ay^{e_y} + bx^{e_x}}{x^{e_x}y^{e_y}}, \; e_x, e_y \ge 1, \; x \nmid a, \; y \nmid b, \text{ then }
$$
$$
    x^{e_x-1}y^{e_y-1} = ax^{e_x} + by^{e_y}.
$$
Since $x$ and $y$ are non-zero-divisors on $R/(y)$ and $R/(x)$ respectively, right hand side is non-divisible by $x$ or $y$. Thus $e_x = e_y = 1$, but $1 = ax^{e_x} + by^{e_y}$ contradicts $(x, y) \neq R$. 

\end{proof}

\begin{example}
\label{ex:k[]}
    In partucular, theorem \ref{theorem:counterex}.2 provides the following simple example of a non-surjective Milnor patching diagram (for any field $k$)    
    $$
        \mc{L}(k[x, y], x, y) = \vcenter{\xymatrix{k[x, y] \ar[r] \ar[d] & k[x^{\pm 1}, y] \ar[d] \\ k[x, y^{\pm 1}] \ar[r] & k[x^{\pm 1}, y^{\pm 1}] }}.
    $$
    Though the proof we presented invokes some general theory developed in \cite{EM} to guarantee determinant-induced factorization in $k[x, y]$, for this particular square it may be done by elementary methods (as we do in section \ref{sec:classification} below). 
\end{example}

\section{Partial classification of Milnor patching diagrams}
\label{sec:classification}
In this section we prove theorem \ref{theorem:strong_patching_equivalence} and apply it to corollary \ref{theorem:local}. We begin with the following analogue of determinant-induced factorization for the current setting, useful for the $(1) \Rightarrow (2)$ part of theorem \ref{theorem:strong_patching_equivalence}.

\begin{lemma}
\label{lemma:DIF-via-Smith}
    Suppose $R/(x)$ is a Euclidean domain, $g : R \to R'$ is such that the induced map $\bar g : R/(x) \to R'/(g(x))$ is injective and identifies $R'/(g(x))$ with some subring of the fraction field $k(R/(x))$. Suppose also
    $$
        X \in \Mat_n(R'),\; \det(X) = g(x)a \;\text{ for some } a \in R'.
    $$
    Then there exist $A \in \Mat_n(R), A' \in \Mat_n(R')$, such that
    $$
        \det(A) = x, \; \det(A') = a, \; X = g(A) A'.
    $$
\end{lemma}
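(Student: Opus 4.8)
The plan is to reduce everything to Smith normal form over the Euclidean domain $R/(x)$, then lift. Write $\pi: R' \to R'/(g(x))$ for the quotient map and let $\bar R = R/(x)$, $\bar R' = R'/(g(x))$, with $\bar R \subseteq \bar R' \subseteq k(\bar R)$ by hypothesis. The key observation is that over $\bar R'$ (a subring of a field, hence a domain) the image $\bar X := \pi(X) \in \Mat_n(\bar R')$ has $\det(\bar X) = \pi(g(x)a) = 0$, since $g(x) \mapsto 0$. So $\bar X$ is a singular matrix over the domain $\bar R'$, and hence over $k(\bar R)$ its column space has dimension $\le n-1$. I want to factor $X = g(A)A'$ with $\det A = x$; reducing mod $g(x)$, this forces $\bar X = $ (something killed by $g(A)$, which reduces to $\bar g(\bar A)$) and $\det(\bar A) = 0$ in $\bar R$. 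The natural choice: take $\bar A$ to be a matrix over $\bar R$ whose columns span a space containing the column space of $\bar X$ but with $\det \bar A = 0$ of the "right shape" — concretely, use Smith normal form of $\bar X$ viewed suitably.

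**The main construction.** Here is the concrete route. Since $\bar R$ is a PID (Euclidean), the submodule of $\bar R'^n$ (or better, work with the $\bar R$-module generated by the columns of $\bar X$ inside $k(\bar R)^n$) can be analyzed, but cleaner: apply invertible row and column operations over $\bar R'$... actually I would instead clear denominators. Since each column of $\bar X$ lies in $\bar R'^n \subseteq k(\bar R)^n$, and $\bar R'$ localizes inside $k(\bar R)$, there is $d \in \bar R$ nonzero with $d\bar X \in \Mat_n(\bar R)$. Over the PID $\bar R$, put $d\bar X = \bar P \bar D \bar Q$ in Smith normal form with $\bar P, \bar Q \in \GL_n(\bar R)$ and $\bar D = \diag(d_1, \dots, d_{n-1}, 0)$ (the last invariant factor is $0$ because $\det \bar X = 0$). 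Then $\bar X = \bar P (\bar D/d) \bar Q$ where $\bar D / d$ need not be integral, but $\bar P \diag(1,\dots,1,x\text{-analog})$... Let me restate the target: I want $\bar A \in \Mat_n(\bar R)$ with $\det \bar A = 0$ (image of $\det A = x$) such that $\bar X = \bar g(\bar A)\, \bar{A'}$ for some $\bar{A'}$ over $\bar R'$. Take $\bar A = \bar P \cdot \diag(1, \dots, 1, 0) \cdot \bar P^{-1}$? No — I need $\bar A$ to absorb the singularity in the correct way. The honest choice is $\bar A = \bar P\, \diag(1,\dots,1,0)\, \bar Q'$ for a suitable $\bar Q'$, after which $\bar{A'} = \bar g$-lift issues are handled. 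Then lift $\bar P, \bar Q$ to $P, Q \in \GL_n(R)$ (possible since $\GL$ surjects under $R \twoheadrightarrow R/(x)$ — actually need $\bar P \in \GL_n(\bar R)$ lifts to $\GL_n(R)$, which holds because $\bar R = R/(x)$ and one can lift the matrix and correct the determinant, or use that $R \to R/(x)$ is surjective on $\GL_n$ for $n \ge 1$ via elementary matrices over a Euclidean quotient — here I'd cite or prove a small lemma), set $A = P\,\diag(1,\dots,1,x)\,Q$ so $\det A = x$ and $g(A) \equiv \bar g(\bar A) \pmod{g(x)}$ where $\bar A := \bar P\, \diag(1,\dots,1,0)\,\bar Q$. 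Finally define $A' := g(A)^{-1}X$ computed in $\Mat_n(R'_{g(x)})$ — I must check this lands in $\Mat_n(R')$.

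**Checking integrality of $A'$ and the determinant.** We have $g(A) = g(P)\,\diag(1,\dots,1,g(x))\,g(Q)$ with $g(P), g(Q) \in \GL_n(R')$, so $g(A)^{-1} = g(Q)^{-1}\,\diag(1,\dots,1,g(x)^{-1})\,g(P)^{-1}$. Thus $A' = g(Q)^{-1}\,\diag(1,\dots,1,g(x)^{-1})\,g(P)^{-1}X$. Now $g(P)^{-1}X$ reduces mod $g(x)$ to $\bar P^{-1}\bar X = (\bar D/d)\,\bar Q$ — wait, I need $\bar P^{-1}\bar X$ to have its last row divisible by... Let me re-set the Smith normal form so that $\bar P^{-1}\bar X \bar Q^{-1}$ is diagonal with last entry $0$; then $\bar P^{-1}\bar X$ has its last row equal to $0 \cdot \bar Q$'s contribution... hmm, this is the crux: I want $g(P)^{-1}X$ to have last row $\equiv 0 \pmod{g(x)}$ so that $\diag(1,\dots,1,g(x)^{-1})\,g(P)^{-1}X \in \Mat_n(R')$. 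That last row is $g(x) \cdot (\text{something in } R')$ precisely when the last row of $\bar P^{-1}\bar X$ vanishes in $\bar R'$, i.e. (lifting back) the last row of $P^{-1}X$ lies in $g(x)R'$. Since $\det \bar X = 0$ and we chose $\bar P$ from Smith form, $\bar P^{-1}\bar X$ is (after also applying $\bar Q$ on the right, but we can fold $\bar Q^{-1}$ into the definition) a matrix whose last row is $0$ — so by picking $\bar P$ as the left Smith factor and only using row reduction structure, $\bar P^{-1}\bar X$ has last row zero, giving integrality. The right factor $g(Q)^{-1}$ then just permutes/recombines and $A = P\,\diag(1,\dots,1,x)$ (no $Q$ needed — fold $Q$ away) works; then $\det A = x$ and $\det A' = \det(g(A))^{-1}\det X = (g(x))^{-1}g(x)a = a$ as required. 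The main obstacle — the step I expect to need the most care — is exactly this integrality of $A'$: arranging the Smith-form lifting so that the last row of $g(P)^{-1}X$ is genuinely divisible by $g(x)$ in $R'$ (not merely in $R'_{g(x)}$), which requires that the vanishing of that row mod $g(x)$ is witnessed in $\bar R'$, and this is where the hypothesis "$\bar g$ identifies $\bar R'$ with a subring of $k(\bar R)$ containing $\bar R$" (so that singular matrices over $\bar R'$ behave like singular matrices over the PID-after-localization) is used. The secondary technical point is lifting $\GL_n(\bar R)$ to $\GL_n(R)$ and lifting the diagonal Smith matrix, both routine given $R \twoheadrightarrow R/(x)$ and $\bar R$ Euclidean.
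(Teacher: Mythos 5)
Your proposal follows essentially the same route as the paper: clear denominators so the reduction of $X$ becomes (a unit multiple of) a matrix over $R/(x)$, apply Smith normal form over the Euclidean domain, use $\det = 0$ over the domain $R'/(g(x))$ to produce a zero row in the diagonalized matrix, lift the transforming matrices to $R$, and peel off a factor $\diag(x,1,\dots,1)$; the integrality of $A'$ is handled exactly as in the paper, by observing that the relevant row vanishes in $R'/(g(x))$ and hence is divisible by $g(x)$ in $R'$. The one step where your stated justification does not hold is the lifting of $\bar P, \bar Q \in \GL_n(R/(x))$ to $\GL_n(R)$: the map $\GL_n(R) \to \GL_n(R/(x))$ is not surjective in general, since units of $R/(x)$ need not lift to units of $R$, and ``correcting the determinant'' is therefore not available. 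The paper avoids this by stating Smith normal form with $U, V \in E_n(R/(x))$, the subgroup generated by transvections $I_n + \lambda e_{ij}$; these lift trivially by lifting $\lambda$, and since they have determinant $1$ the computation $\det A = x$ still goes through. With that substitution --- which is precisely the ``small lemma'' you flag as needed --- your argument is complete and coincides with the paper's.
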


Before passing to the proof we record a version of Smith normal form for a matrix over Euclidean domain.
\begin{proposition}
\label{prop:smith}
    Let $Q$ be a Euclidean domain, $Y \in \Mat_n(Q)$. Then there are $U, V \in E_n(Q)$ --- subgroup of $\SL_n(Q)$ generated by transvections (matrices of the form $I_n + \lambda e_{ij}$ with $\lambda \in Q$, $1 \le i \neq j \le n$) --- such that $UYV$ is diagonal.
\end{proposition}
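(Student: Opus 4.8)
The plan is to run the classical Smith-normal-form reduction over $Q$, but keeping careful track that every row or column operation we use is multiplication by a transvection, so that the accumulated change-of-basis matrices land in $E_n(Q)$ rather than merely in $\GL_n(Q)$. I argue by induction on $n$; the cases $n=1$ and $Y=0$ are trivial, so assume $n\ge 2$ and $Y\ne 0$. First I fix a Euclidean function $\delta$ on $Q\setminus\{0\}$, and after replacing $\delta(a)$ by $\min_{b\ne 0}\delta(ab)$ if necessary I may assume $\delta(a)\le\delta(ab)$ for all nonzero $a,b$, so that in particular $\delta(ua)=\delta(a)$ for every unit $u$.

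The heart of the argument is to produce $U,V\in E_n(Q)$ with $UYV=\left(\begin{smallmatrix} d & 0\\ 0 & Y'\end{smallmatrix}\right)$ for some $Y'\in\Mat_{n-1}(Q)$. I consider the set of all nonzero entries of matrices $UYV$ with $U,V\in E_n(Q)$; it is nonempty since $Y\ne 0$ and $I_n\in E_n(Q)$, so I may pick $U_0,V_0$ and a position whose entry $d$ in $U_0YV_0$ has minimal $\delta$-value among them. Because a signed swap of two rows (or columns), e.g.\ $\left(\begin{smallmatrix}0&1\\-1&0\end{smallmatrix}\right)=(I+e_{12})(I-e_{21})(I+e_{12})$, lies in $E_n(Q)$, I can move $d$ into position $(1,1)$ at the cost of multiplying it by a unit, which does not change $\delta(d)$; so assume $(U_0YV_0)_{11}=d$ and write $Y_0:=U_0YV_0$. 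Then $d$ divides every other entry of the first row and first column of $Y_0$: if the entry in position $(1,j)$, $j\ne 1$, is $c$, write $c=qd+r$ with $r=0$ or $\delta(r)<\delta(d)$; right-multiplying $Y_0$ by the transvection $I-qe_{1j}$ replaces $c$ by $r$, and minimality of $\delta(d)$ forces $r=0$ (symmetrically for the first column, using left transvections $I-qe_{i1}$). Now clearing the first row by right transvections $I-q_je_{1j}$ and then the first column by left transvections $I-q'_ie_{i1}$ puts $Y_0$ in the block form $\left(\begin{smallmatrix} d & 0\\ 0 & Y'\end{smallmatrix}\right)$ — the second round of operations, acting on rows $2,\ldots,n$, does not recreate nonzero entries in the first row because that row is already $(d,0,\ldots,0)$. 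Finally I apply the induction hypothesis to $Y'$ to get $U',V'\in E_{n-1}(Q)$ with $U'Y'V'$ diagonal, and, noting that the obvious inclusion $\Mat_{n-1}(Q)\hookrightarrow\Mat_n(Q)$ onto the lower-right block sends transvections to transvections, I obtain $\left(\begin{smallmatrix}1&0\\0&U'\end{smallmatrix}\right),\left(\begin{smallmatrix}1&0\\0&V'\end{smallmatrix}\right)\in E_n(Q)$; composing everything produces $U,V\in E_n(Q)$ with $UYV$ diagonal.

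The only point that requires genuine care is that we never step outside $E_n(Q)$: the permutations used to bring the $\delta$-minimal entry into the corner must be performed as signed swaps (three transvections apiece) rather than as honest permutation matrices of determinant $-1$, and for this replacement to be harmless I normalized $\delta$ at the outset so that it is insensitive to multiplication by units. Everything else is the standard "Euclidean algorithm in the corner" argument, so I expect the write-up to be short.
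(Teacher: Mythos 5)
The paper states Proposition \ref{prop:smith} without proof, recording it as a standard fact, so there is no argument of the author's to compare against. Your proof is correct and is the standard ``Euclidean algorithm in the corner'' induction; the two genuinely delicate points --- realizing row/column swaps as signed swaps, i.e.\ products of three transvections, so as to stay inside $E_n(Q)$, and normalizing the Euclidean function $\delta$ to be insensitive to units so that the minimality argument survives the sign introduced by those swaps --- are both identified and handled properly, and the observation that clearing the first column after the first row is already $(d,0,\dots,0)$ does not disturb the rest of the matrix closes the induction cleanly.
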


\begin{proof}[\textbf{Proof of lemma \ref{lemma:DIF-via-Smith}}]
    Let $p: R \to R/(x)$ and $p': R' \to R'/(g(x))$ be the quotient maps. Since $\bar g$ identifies $R'/(g(x))$ with some subring inside the fraction field $k(R/(x))$, taking $T \in R/(x)$ equal to the product of all denominators in $p'(X)$ guarantees us
    $$
        \bar g(T) p'(X) = \bar g(Y) \text{ for some } Y \in \Mat_n(R/(x)).
    $$
    By Smith normal form (\ref{prop:smith}) there exist $U_0, V_0 \in E_n(R/(x))$ such that $U_0 Y V_0$ is diagonal. Then
    $$
        D_0 = \frac{1}{\bar g(T)} \bar g(U_0 Y V_0) = \bar g(U_0) p'(X) \bar g(V_0) \text{ is diagonal as well.}
    $$
    At the same time
    $$
        \det(D_0) = \det(p'(X)) = p'(\det(X)) = 0, 
    $$
    thus $D_0$ has a row (say, it's the first row) consisting of zeros only.

    Take $U, V \in E_n(R)$ such that $p(U) = U_0,\; p(V) = V_0$, then
    $$
        p'(g(U) X g(V)) = \bar g(U_0) p'(X) \bar g(V_0) = D_0, 
    $$
    therefore all the elements in the first row of $g(U) X g(V)$ are divisible by $g(x)$. In other words 
    $$
        g(U) X g(V) = \diag(g(x), 1, \ldots, 1) \cdot X' \text{ for some } X' \in \Mat_n(R'). 
    $$
    Then
    $$
        X = g(U^{-1} \diag(x, 1, \ldots, 1)) \cdot \big( X' g(V^{-1}) \big), 
    $$
    and we are done by taking $A = U^{-1} \diag(x, 1, \ldots, 1)$, $A' = X' g(V^{-1})$.
\end{proof}

\begin{proof}[\textbf{Proof of theorem \ref{theorem:strong_patching_equivalence}. (1) $\Rightarrow$ (2)}.]
Since $\bar g$ is injective, $\mc{F}_{loc}$ is cartesian by proposition \ref{prop:g-injective}.

Consider $X \in \SL_n(R'_{g(x)})$. For sufficiently large $m$
$$
    g(x)^m X \in \Mat_n(R'), \; \det (g(x)^m X) = g(x)^{mn}.
$$
Iterative application of lemma \ref{lemma:DIF-via-Smith} provides a decomposition $g(x)^m X = g(A) A'$ with
$$
    A \in \Mat_n(R), \; \det(A) = x^{mn}, \;\;\; A' \in \Mat_n(R'), \; \det(A') = 1.
$$
Hence
$$
    X = g_x(x^{-m} A) \cdot A', \; x^{-m} A \in \SL_n(R_x), \; A' \in \SL_n(R'), 
$$
and we are done by \ref{lemma:sl}.
\end{proof}

\begin{example}
\label{ex:k[[]]}
    Substituting $R = k[x, y], \; R' = k[y^{\pm1}][[x]]$ (for a field $k$) into theorem \ref{theorem:strong_patching_equivalence}, we obtain a strong Milnor patching diagram
    $$
        \vcenter{\xymatrix{k[x, y] \ar[r] \ar[d] & k[x^{\pm1}, y] \ar[d] \\ k[y^{\pm1}][[x]] \ar[r] & k[y^{\pm1}]((x))}}, 
    $$
    which generalizes (in some sense) both example \ref{ex:k[]} and the classic Beauville-Laszlo square from \cite{BL} with $R' = \widehat R = \varprojlim R/(x^n)$.
\end{example}

Now we proceed to prove the other implication of theorem \ref{theorem:strong_patching_equivalence}. For a commutative square $\mc{F}$ (not necessarily principal localization one, thus we use the notation from diagram (\ref{fig:R1R2S})) and any $s \in S$ define the ideal $J(s)$ of $R$ as
$$
    J(s) = \{r \in R \mid h_1(g_1(r)) \cdot s \in h_1(R_1) + h_2(R_2) \}. 
$$
The following lemma, with the proof similar to the one of \cite[Proposition 2.3]{L}, relates these ideals to Milnor patching property.
\begin{lemma}
\label{lemma:JR=R}
    Suppose $\mc{F}$ is a cartesian square, and for some $s \in S$ there are $n \ge 2$, $A_1 \in \GL_n(R_1), \; A_2 \in \GL_n(R_2)$ such that
    $$
        h_1(A_1) T'(s) = h_2(A_2) \; \text{ where } T'(s) = \begin{pmatrix} 1 & s & 0 & \ldots & 0 \\ 0 & 1 & 0 & \ldots & 0 \\ 0 & 0 & * & \ldots & * \\ \vdots & \vdots & \vdots & & \vdots \\ 0 & 0 & * & \ldots & * \end{pmatrix} \in \GL_n(S).
    $$
    Then $J(s) R_1 = R_1, \; J(s) R_2 = R_2$.
\end{lemma}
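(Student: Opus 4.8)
The plan is to extract everything we need from the single matrix identity $h_1(A_1)T'(s)=h_2(A_2)$ by reading off its first two columns. Write $A_1=(a^{(1)}_{ij})$ over $R_1$ and $A_2=(a^{(2)}_{ij})$ over $R_2$. The first column of $T'(s)$ is $(1,0,\dots,0)^{\top}$ and its second column is $(s,1,0,\dots,0)^{\top}$, so comparing the first two columns of the two sides in $S^n$ yields, for every $i$,
$$h_1(a^{(1)}_{i1})=h_2(a^{(2)}_{i1}),\qquad s\,h_1(a^{(1)}_{i1})+h_1(a^{(1)}_{i2})=h_2(a^{(2)}_{i2}).$$
(The lower-right $*$-block of $T'(s)$ will never be touched.)

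Next I would apply the cartesian property of $\mc F$ coordinatewise to the left-hand relation: the pair $(a^{(1)}_{i1},a^{(2)}_{i1})\in R_1\oplus R_2$ lies in $\ker\psi=\im\varphi$, so there is a (unique) $r_i\in R$ with $g_1(r_i)=a^{(1)}_{i1}$ and $g_2(r_i)=a^{(2)}_{i1}$. Feeding this into the right-hand relation gives $s\,h_1(g_1(r_i))=h_2(a^{(2)}_{i2})-h_1(a^{(1)}_{i2})\in h_1(R_1)+h_2(R_2)$, which is exactly the condition defining $J(s)$; hence $r_i\in J(s)$ for all $i=1,\dots,n$. Along the way I would recall, or quickly check using $h_1g_1=h_2g_2$, that $J(s)$ is genuinely an ideal of $R$, so that $J(s)R_1$ and $J(s)R_2$ are honest ideals of $R_1$ and $R_2$.

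It remains to use invertibility. From $A_1^{-1}A_1=I_n$, the $(1,1)$-entry reads $\sum_{k}(A_1^{-1})_{1k}\,a^{(1)}_{k1}=1$ in $R_1$, i.e.\ $\sum_k(A_1^{-1})_{1k}\,g_1(r_k)=1$; since every $r_k\in J(s)$, this exhibits $1\in J(s)R_1$, so $J(s)R_1=R_1$. The argument for $R_2$ is identical: the first column of $A_2$ equals $(g_2(r_k))_k$, and invertibility of $A_2$ over $R_2$ forces $1\in J(s)R_2$. Note that only the first two columns of $T'(s)$ and the invertibility of $A_1,A_2$ over $R_1,R_2$ enter, and no patching hypothesis on $\mc F$ beyond cartesianness is used. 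There is essentially no obstacle here; the only points demanding a bit of care are the coordinatewise use of the exact sequence $0\to R\to R_1\oplus R_2\to S$ and consistent bookkeeping with the identification $h_1g_1=h_2g_2$.
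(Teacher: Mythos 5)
Your proof is correct and follows essentially the same route as the paper's: read off the first two columns of the identity $h_1(A_1)T'(s)=h_2(A_2)$, lift the first column of $A_1$ (and simultaneously of $A_2$) to elements of $R$ via the cartesian property, observe these lifts lie in $J(s)$, and conclude by invertibility of $A_1$ and $A_2$ that the columns generate the unit ideal in $R_1$ and $R_2$. The only cosmetic difference is notational (entries $a^{(k)}_{ij}$ versus named column vectors $c_i,d_i,e_i$), so there is nothing to add.
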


\begin{remark}
\label{remark:JR=R}
    For example, the condition of lemma \ref{lemma:JR=R} is satisfied for any $s \in S$ when $\mc{F}$ is a strong Milnor patching diagram (straight from the definition and proposition \ref{prop:g-injective}).
\end{remark}

\begin{proof}[\textbf{Proof of lemma \ref{lemma:JR=R}.}]

    Let $c_1, \ldots, c_n, \; d_1, \ldots, d_n$ be the first two columns of $A_1$, $e_1, \ldots, e_n$ -- the first column of $A_2$. Then 
    $$
        h_1(c_i) = h_2(e_i), \;\; h_1(c_i) \cdot s + h_1(d_i) \in h_2(R_2) \text{ for } 1 \le i \le n.
    $$
    Since the square is cartesian, the first condition implies that $c_i = g_1(c'_i)$ for some $c'_i \in R$. Then the second condition transforms into
    $$
        h_1(g_1(c'_i)) \cdot s \in h_1(R_1) + h_2(R_2).
    $$
    Hence $c'_i \in J(s)$ and
    $$
        J(s)R_1 \supset (g_1(c'_1), \ldots, g_1(c'_n)) R_1 = (c_1, \ldots, c_n) R_1 = R_1, 
    $$
    where the last equality follows from $A_1$ being invertible. Similarly, $J(s) R_2 = R_2$.
\end{proof}

\begin{proof}[\textbf{Proof of theorem \ref{theorem:strong_patching_equivalence}. (2) $\Rightarrow$ (1).}]
    To simplify the notation, we use $Q = R/(x), \; Q' = R'/(g(x))$. We need to prove that     
    \begin{itemize}
        \item $\bar g: Q \to Q'$ is injective;
        \item $Q'$ is a domain;
        \item the induced map $k(Q) \to k(Q')$ between the fraction fields is an isomorphism --- or in other words, for any $y' \in Q'$ there is $y \in Q$, such that $\bar g(y) y' \in \im \bar g$.
    \end{itemize}
    
    Since $\mc{F}_{loc}$ is a strong Milnor patching diagram, it is cartesian by proposition \ref{prop:cartesian}, and by proposition \ref{prop:g-injective} the first condition follows.

    For $r' \in R'$ consider the ideal $J = J(r'/g(x)) \subset R$.
    Then for any $j \in J$
    $$
        g(j) \frac{r'}{g(x)} = r'_1 + \frac{g(r_2)}{g(x)^k} \text{ for some } r'_1 \in R', \; r_2 \in R,
    $$
    $$
        g(j) r' - g(x) r'_1 = \frac{g(r_2)}{g(x)^{k-1}}. 
    $$
    Left hand side lies in $R'$, right hand side lies in $g_x(R_x)$, thus they both come from $R$ since the square is cartesian. Then after passing to the quotient $\overline{g(j) r'} \in \im \bar g \subset R'/(g(x))$.

    So, for the third condition it remains to check that $J_1$ --- projection of $J$ onto $Q = R/(x)$ --- is nonzero. Indeed, by lemma \ref{lemma:JR=R} (and remark \ref{remark:JR=R}) $J R' = R'$, hence
    $$
        J_1 Q' = Q'.
    $$
    Since $\mc{F}_{loc}$ is cartesian and $Q \neq 0$, $Q' \neq 0$ as well.

    For the second condition, suppose $a, b \in Q', \; ab = 0$. Again, consider $J_a, J_b \subset Q$ --- projections of $J(a/g(x))$ and $J(b/g(x))$ onto $Q = R/(x)$. Since $Q$ is a Euclidean domain, these ideals are principal, let $y_a$ and $y_b$ be their generators. As above, 
    $$
        \bar g(J_a) a \subset \im \bar g, \; \bar g(J_b) b \subset \im \bar g,
    $$
    $$
        J_a Q' = J_b Q' = Q'.
    $$
    So, $\bar g(y_a), \bar g(y_b)$ are invertible in $Q'$, but $\bar g(y_a) a, \bar g(y_b) b$ lie in the domain $\im \bar g \simeq Q$. Thus one of $a$ and $b$ has to be zero.       
\end{proof}

\begin{remark}
    The proof of implication \ref{theorem:strong_patching_equivalence}.(2) $\Rightarrow$ (1) only used that $Q = R/(x)$ is a PID, not necessarily Euclidean. Moreover, even this condition may be discarded if one assumes in advance that $Q' = R'/(g(x))$ is a domain.
\end{remark}

\begin{proof}[\textbf{Proof of corollary \ref{theorem:local}}]
    First consider the case when $R$ is a regular local ring of dimension $2$. Then $R/(x)$ is a discrete valuation ring; in particular, it is a Euclidean domain and has only two non-trivial localizations --- itself and its fraction field. Since every projective module over a local ring is free, by lemma \ref{lemma:strong-usual} Milnor patching is equivalent to strong Milnor patching, and theorem \ref{theorem:strong_patching_equivalence} concludes the proof.

    The remaining case follows from results of Beauville-Laszlo and Landsburg. Indeed, if $\bar g$ is an isomorphism then by proposition \ref{prop:g-iso} the square $\mc{F}_{loc}$ is cartesian and surjective, and thus a Milnor patching diagram by Beauville-Laszlo theorem \ref{Beauville-Laszlo}. The converse implication is obtained via Landsburg's theorem \ref{Landsburg} (conditions on $R$ allow us to apply it), proposition \ref{prop:cartesian} and, again, proposition \ref{prop:g-iso}. 
\end{proof}

\section{Patching of other categories of modules}
\label{sec:other}
This section is devoted to the proofs of theorem \ref{theorem:general_categories} and corollary \ref{cor:f-torsion_fin-gen}. Throughout the section we assume that the functors $\FF_{\mc{F}}$ and $\GG_{\mc{F}}$, corresponding to the commutative square
$$
    \mc{F} = \vcenter{\xymatrix{R \ar[r]^{g_1} \ar[d]^{g_2} & R_1 \ar[d]^{h_1} \\ R_2 \ar[r]^{h_2} & S}},
$$
induce mutually inverse equivalences between the categories $\mc{C}$ (full subcategory of $\Rmod$) and $\mc{D}$ (full subcategory of $\PD(\mc{F})$).

\begin{lemma}
\label{lemma:R_in_C}
    If $R \in \mc{C}$ then $\mc{F}$ is cartesian.
\end{lemma}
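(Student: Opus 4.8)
The plan is to feed the single object $R \in \mc{C}$ through the equivalence and to compute $\GG_{\mc{F}}(\FF_{\mc{F}}(R))$ so explicitly that the resulting isomorphism with $R$ becomes, verbatim, the cartesianness of $\mc{F}$.

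First I would unwind the definitions. Using the canonical identifications $R \ot_R R_i = R_i$ and $R \ot_R S = S$ — under which $\id_{R \ot_R S}$ is $\id_S$ and, for $r_i \in R_i$, the element $r_i \ot 1 \in R_i \ot_{R_i} S = S$ is $h_i(r_i)$ — we get $\FF_{\mc{F}}(R) = (R_1, R_2, \id_S)$ and
$$
\GG_{\mc{F}}\bigl(\FF_{\mc{F}}(R)\bigr) = \{(r_1, r_2) \in R_1 \oplus R_2 : h_1(r_1) = h_2(r_2)\} = \ker \psi .
$$
Moreover, the tautological $R$-linear map $\eta_R \colon R \to \GG_{\mc{F}}(\FF_{\mc{F}}(R))$ sending $r$ to $(r \ot 1,\, r \ot 1)$ is, after these identifications, precisely $r \mapsto (g_1(r), g_2(r)) = \varphi(r)$, regarded as a map into the submodule $\ker\psi \subseteq R_1 \oplus R_2$. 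This $\eta_R$ is the component at $R$ of the unit of the adjunction $\FF_{\mc{F}} \dashv \GG_{\mc{F}}$: by the standard adjunction between extension and restriction of scalars, a family of $R_i$-linear maps $M \ot_R R_i \to M_i$ compatible with the gluing isomorphism is the same datum as an $R$-linear map $M \to \GG_{\mc{F}}(M_1, M_2, \zeta)$.

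Now I would invoke the hypothesis. Since $\FF_{\mc{F}}$ and $\GG_{\mc{F}}$ carry $\mc{C}$ into $\mc{D}$ and $\mc{D}$ into $\mc{C}$ and restrict there to mutually inverse equivalences, and since $\mc{C}, \mc{D}$ are full subcategories, the adjunction $\FF_{\mc{F}} \dashv \GG_{\mc{F}}$ restricts to an adjunction between $\mc{C}$ and $\mc{D}$ whose left adjoint is an equivalence, hence fully faithful; the unit of an adjunction with fully faithful left adjoint is a natural isomorphism. As $R \in \mc{C}$, this yields that $\eta_R = \varphi \colon R \to \ker\psi$ is an isomorphism of $R$-modules, i.e. $\varphi$ is injective with image $\ker\psi$; equivalently, $0 \to R \xrightarrow{\varphi} R_1 \oplus R_2 \xrightarrow{\psi} S$ is exact, so $\mc{F}$ is cartesian.

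The only step requiring care is the last one: the bare equivalence supplies merely some abstract isomorphism $R \cong \GG_{\mc{F}}(\FF_{\mc{F}}(R))$, and one must know it may be taken to be the canonical comparison $\eta_R = \varphi$. The adjunction settles this cleanly; alternatively one can argue by hand, deducing injectivity of $\varphi$ from faithfulness of $\FF_{\mc{F}}$ on $\mc{C}$ applied to the multiplication-by-$r$ endomorphisms $R \to R$, and surjectivity of $\varphi$ onto $\ker\psi$ from fullness of $\FF_{\mc{F}}$ on $\mc{C}$ (every element of $\ker\psi = \GG_{\mc{F}}(\FF_{\mc{F}}(R))$ is picked out by a morphism $\FF_{\mc{F}}(R)\to\FF_{\mc{F}}(R)$, which comes from an endomorphism of $R$).
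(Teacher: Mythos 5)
Your proof is correct, but it takes a genuinely different route from the paper's. You resolve the key subtlety (that the equivalence only hands you \emph{some} abstract isomorphism $R \cong \GG_{\mc{F}}(\FF_{\mc{F}}(R))$) by exhibiting $\varphi$ as the unit of the adjunction $\FF_{\mc{F}} \dashv \GG_{\mc{F}}$ --- which you correctly verify via extension/restriction of scalars --- and then invoking the standard fact that a fully faithful left adjoint has invertible unit; your fallback argument via fullness and faithfulness of $\FF_{\mc{F}}$ on $\mc{C}$ applied to multiplication endomorphisms of $R$ is a hands-on version of the same idea. The paper instead sidesteps canonicity entirely: it uses only that some $R$-module isomorphism $\xi \colon R \to R' = \ker\psi$ exists, together with the observation that $R'$ is a \emph{unital subring} of $R_1 \times R_2$ and that the canonical map $f = \varphi \colon R \to R'$ is a unital ring homomorphism. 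Setting $a = \xi(1)$, one has $\xi(r) = a\,f(r)$, so injectivity of $f$ follows from injectivity of $\xi$, and a short computation with $\xi$-preimages of $1$ and $a^2$ shows $a \in \im f$, whence $f$ is surjective. The paper's trick buys self-containedness (no adjunction needs to be set up or its unit identified), while your argument is the more conceptual one and makes transparent why the isomorphism can be taken to be the canonical comparison map. Both proofs are complete.
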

\begin{proof}
    Consider 
    $$
        R' = \{(r_1, r_2) \in R_1 \times R_2 \mid h_1(r_1) = h_2(r_2)\}, 
    $$
    note that it is a unital subring of $R_1 \times R_2$. By commutativity of the square the map $\varphi: R \to R_1 \times R_2, \; \varphi(r) = (g_1(r), g_2(r))$ induces a homomorphism of unital rings $f:R \to R'$. We need to check that $f$ is actually bijective.

    On the other hand, since $R \in \mc{C}$ there is an isomorphism of $R$-modules
    $$
        R \simeq \GG_{\mc{F}}(\FF_{\mc{F}}(R)) = \ker(\psi: R_1 \oplus R_2 \to S; (r_1, r_2) \mapsto h_1(r_1) - h_2(r_2)) = R', 
    $$
    denote it by $\xi: R \to R'$. 
    
    Let $a = \xi(1)$. If $f(r) = 0$ for some $r \in R$, then $\xi(x) = a f(x) = 0$, what implies $x = 0$ by injectivity of $\xi$. So, $f$ is injective.

    For surjectivity it's sufficient to check $a \in \im f$. By surjectivity of $\xi$ there are $r_0, r_1 \in R$ such that
    $$
        \xi(r_0) = af(r_0) = 1, \; \xi(r_1) = af(r_1) = a^2, \text{ then }
    $$
    $$
        0 = (af(r_1) - a^2) f(r_0) = f(r_1) - a, 
    $$
    thus $a = f(r_1)$.
\end{proof}
So, being cartesian is again an easy part. To check surjectivity, we recall the notation used in section \ref{sec:classification}:  for $s \in S$ define 
$$
    J(s) = \{r \in R \mid h_1(g_1(r)) \cdot s \in h_1(R_1) + h_2(R_2) \} \text{ --- ideal of } R,
$$
$$
    T(s) = \begin{pmatrix} 1 & s \\ 0 & 1 \end{pmatrix} \in \GL_2(S).
$$


\begin{proof}[\textbf{Proof of theorem \ref{theorem:general_categories}.}]
    The square $\mc{F}$ is cartesian by lemma \ref{lemma:R_in_C}, therefore
    $$
        \{ (r_1, r_2) \in R_1 \oplus R_2 \mid h_1(r_1) = h_2(r_2) \} = \{(g_1(r), g_2(r)) \mid r \in R\} \simeq R.
    $$
    
    Surjectivity is equivalent to $J = J(s)$ being equal to $R$ for any $s \in S$. Consider the short exact sequence of patching data given by
    $$
        \xymatrix{0 \ar[r] & R_i \ar[r]^{\inn_1} & R_i \oplus R_i \ar[r]^{\pr_2} & R_i \ar[r] & 0} \;\; (i = 1, 2)
    $$
    with patching along
    $$
        \xymatrix{0 \ar[r] & S \ar[d]^{\id} \ar[r]^{\inn_1} & S^2 \ar[r]^{\pr_2} \ar[d]^{T(s)} & S \ar[r] \ar[d]^{\id} & 0 \\ 
        0 \ar[r] & S \ar[r]^{\inn_1} & S^2 \ar[r]^{\pr_2} & S \ar[r] & 0}.
    $$    
    We can write $\GG_{\mc{F}}(R_1^2, R^2_2, T(s))$ explicitly as
    $$
        N = \GG_{\mc{F}}(R_1^2, R_2^2, T(s)) = 
    $$
    $$
        = \{(x_1, y_1, x_2, y_2) \in R_1^2 \oplus R_2^2: h_1(x_1) + h_1(y_1) s = h_2(x_2),\; h_1(y_1) = h_2(y_2) \}.
    $$
    Then $\GG_{\mc{F}}(R_1, R_2, \id_S) \simeq R$ is a submodule of $N$, given by $y_1 = y_2 = 0$, and quotient of $N$ by this submodule is
    $$
        \{(y_1, y_2) \in R_1 \oplus R_2 : h_1(y_1) = h_2(y_2), \; h_1(y_1) \cdot s \in h_1(R_1) + h_2(R_2)\} = 
    $$
    $$
        = \{(g_1(r), g_2(r)) \mid r \in R, \; h_1(g_1(r)) \cdot s \in h_1(R_1) + h_2(R_2)\} \simeq J \subset R. 
    $$
    In other words, there is a short exact sequence of $R$-modules
    \begin{equation}
    \label{eq:ses}
        \xymatrix{0 \ar[r] & R \ar[r]^{\GG(\inn_1)} & N \ar[r] & J \ar[r] & 0}.
    \end{equation}
    Both $(R_1, R_2, \id_S) = \FF_{\mc{F}}(R)$ and $(R^2_1, R^2_2, T(s))$ lie in $\mc{D}$, hence application of $\FF_{\mc{F}}$ to the sequence (\ref{eq:ses}) results into the following diagrams (for $i = 1, 2$)
    \begin{equation}   
    \label{eq:5-lemma}
        \vcenter{\xymatrix{ & R_i \ar[rr]^{(\FF\circ\GG)(\inn_1)\;\;} \ar[d] & & N \ot_R R_i \ar[d]^{\nu_i} \ar[r] & J \ot_R R_i \ar[r] \ar@{-->}[d]^{\eta_i} & 0 \\
        0 \ar[r] & R_i \ar[rr]^{\inn_1} & & R_i \oplus R_i \ar[r]^{\pr_2} & R_i \ar[r] & 0}},
    \end{equation}
    where the left and the middle vertical arrows are isomorphisms arising from natural transformation between $\FF_{\mc{F}} \circ \GG_{\mc{F}}$ and $\id_{\mc{D}}$. This allows us to notice that the map $R_i \to N \ot_R R_i$ is injective and to define the isomorphisms $\eta_i: J \ot_R R_i \to R_i$ (right vertical arrows).

    Since the middle vertical arrows $\nu_i$ of (\ref{eq:5-lemma}) come from morphisms of patching data, the diagram below is commutative
    $$
        \vcenter{\xymatrix{N \ot_R S \ar[r]^{\id} \ar[d]^{\nu_1 \ot_{R_1} \id_S} & N \ot_R S \ar[d]^{\nu_2 \ot_{R_2} \id_S} \\
        S^2 \ar[r]^{T(s)} & S^2}}, 
    $$
    then the similar diagram with the right vertical arrows of (\ref{eq:5-lemma}) is commutative as well
    $$
        \vcenter{\xymatrix{J \ot_R S \ar[r]^{\id} \ar[d]^{\eta_1 \ot_{R_1} \id_S} & J \ot_R S \ar[d]^{\eta_2 \ot_{R_2} \id_S} \\
        S \ar[r]^{\id} & S}}.
    $$
    So, we constructed an isomorphism between $\FF_{\mc{F}}(J)$ and $(R_1, R_2, \id_S) = \FF_{\mc{F}}(R)$ in the category of patching data $\PD(\mc{F})$. It remains to notice that statement of the theorem guarantees both $J$ (as a quotient of $N$) and $R$ to lie inside $\mc{C}$, thus $J \simeq R$ as $R$-modules.

    In particular, this means that $J$ is generated by one element (call it $y_s$) and that the sequence (\ref{eq:ses}) splits, allowing us to write 
    $$
        N \simeq R \oplus J \simeq R^2.
    $$
    Then application of $\FF_{\mc{F}}$ gives
    $$
        (R^2_1, R^2_2, T(s)) \simeq \FF_{\mc{F}} (N) \simeq \FF_{\mc{F}} (R^2) \simeq (R^2_1, R^2_2, \id_{S^2})
    $$
    in the category $\PD(\mc{F})$ of patching data, what puts us in the setting of lemma \ref{lemma:JR=R} (with $n = 2$). This lemma tells that $g_1(y_s)$ and $g_2(y_s)$ are invertible in $R_1$ and $R_2$, respectively. Since the square $\mc{F}$ is cartesian, $y_s$ is then invertible in $R$, therefore $J = R$.
\end{proof}

Now it is not hard to deduce the converse stements to points (b) and (c) of Beauville-Laszlo theorem \ref{Beauville-Laszlo}.
\begin{corollary}
\label{cor:f-torsion_fin-gen}
    Assume that for a principal localization square 
    $$
        \mc{F}_{loc} = \vcenter{\xymatrix{R \ar[r] \ar[d]^{g} & R_x \ar[d]^{g_x} \\ R' \ar[r] & R'_{g(x)}}}
    $$
    corresponding functors $\FF_{\mc{F}_{loc}}$ and $\GG_{\mc{F}_{loc}}$ induce mutually inverse equivalences between category $\CC$ of $R$-modules with property $P$ and category $\DD$ of patching data with property $P$, if $P$ means
    \begin{enumerate}
        \item [(b)] with no $x$-torsion;
        \item [(c)] finitely generated with no $x$-torsion.
    \end{enumerate}
    Then the square $\mc{F}_{loc}$ is cartesian and surjective.
\end{corollary}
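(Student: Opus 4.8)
The plan is to deduce this as a direct application of Theorem~\ref{theorem:general_categories}, taking there $\mc{F} = \mc{F}_{loc}$, $\mc{C} = \CC$ and $\mc{D} = \DD$. It then suffices to verify, for each of the two properties $P$, the three hypotheses of that theorem: that $R \in \CC$; that $(R_1^2, R_2^2, X) \in \DD$ for every $X \in \GL_2(S)$ (here $R_1 = R_x$, $R_2 = R'$, $S = R'_{g(x)}$); and that every ideal $I \subset R$ arising as a quotient of some $M \in \CC$ again lies in $\CC$. Once these are in place, Theorem~\ref{theorem:general_categories} immediately gives that $\mc{F}_{loc}$ is cartesian and surjective.

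To carry out the verification I would use that, by the definition of a principal localization square, $x$ is a non-zero-divisor of $R$; hence multiplication by $x$ is injective on $R$ and on every submodule of $R$, so $R$ itself and every ideal $I \subset R$ have no $x$-torsion. On $R_1 = R_x$ and on $S$ the element $x$ acts invertibly, and on $R_2 = R'$ it acts as the non-zero-divisor $g(x)$, so $R_1^2$ and $R_2^2$ have no $x$-torsion either. For the finite-generation clauses relevant when $P$ is property~(c): $R$ is cyclic over itself; $R_1^2$ is generated by two elements over $R_1$ and $R_2^2$ by two elements over $R_2$, which is exactly what property~(c) requires of an object of $\PD(\mc{F}_{loc})$; and a quotient of a finitely generated module is finitely generated, so an ideal $I$ realized as a quotient of some finitely generated $M$ is itself finitely generated. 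Assembling these facts, all three hypotheses of Theorem~\ref{theorem:general_categories} hold both when $P$ means ``with no $x$-torsion'' and when it means ``finitely generated with no $x$-torsion''.

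Since the substantive content is already contained in Theorem~\ref{theorem:general_categories}, the remaining argument is essentially bookkeeping and I do not expect a genuine obstacle. The one point meriting care is that ``finitely generated'' for an object $(M_1, M_2, \zeta)$ of $\PD(\mc{F}_{loc})$ means that $M_1$ is finitely generated over $R_1$ and $M_2$ over $R_2$, not over $R$; this is precisely what lets $(R_1^2, R_2^2, X)$ count as an object of $\DD$ for property~(c) even though $R_x$ need not be a finitely generated $R$-module.
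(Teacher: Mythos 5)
Your proposal is correct and follows essentially the same route as the paper: both reduce the corollary to Theorem~\ref{theorem:general_categories} and verify its three hypotheses using that $x$ (resp.\ $g(x)$) is a non-zero-divisor, so that $R$, ideals of $R$, $R_1^2$ and $R_2^2$ have no $x$-torsion, and that quotients of finitely generated modules are finitely generated. Your explicit remark about finite generation of $M_i$ being measured over $R_i$ is a point the paper leaves implicit but is consistent with its conventions.
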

\begin{proof}
    We check the conditions needed to apply theorem \ref{theorem:general_categories}.
    Since $x$ is a non-zero-divisor in $R$, projective modules and ideals have no $x$-torsion. Thus point (b) follows, for point (c) it remains to notice that if $I$ is a quotient of some $M \in \mc{C}$, then it is finitely generated and therefore lies in $\mc{C}$ itself.
\end{proof}

\section{Appendix}
\label{sec:appendix}

The proof of the version \ref{Beauville-Laszlo} of Beauville-Laszlo theorem is essentially written in \cite{Stacks}, though it uses somewhat different notation. To clarify this inconsistency we sketch the proof of theorem \ref{Beauville-Laszlo} with references to statements from \cite{Stacks}. 
\begin{proof}[\textbf{Proof of theorem \ref{Beauville-Laszlo}.}]
    Proposition \ref{prop:g-iso}.(1) $\Rightarrow$ (3). implies that $(R \to R', g)$ is a \textit{glueing pair} in the sense of \cite{Stacks}. Then theorem \cite[tag 0BP2]{Stacks} states that $\FF_{\mc{F}_{loc}}$ and $\GG_{\mc{F}_{loc}}$ provide an equivalence beteween the category of \textit{glueable} $R$-modules and the category of patching data. For any $R$-module $M$ there is an exact sequence
    $$
        \xymatrix{M \ar[r] & (M \ot_R R_x) \oplus (M \ot_R R') \ar[r] & M \ot_R R'_{g(x)} \ar[r] & 0 },
    $$
    and in the case when $M$ has no $x$-torsion point (2) of lemma \cite[tag 0BNW]{Stacks} the left arrow is injective as well. Therefore, all $R$-modules with no $x$-torsion are \textit{glueable}. 
    
    Notice that $M \ot_R R'_{g(x)}$ always has no $x$-torsion, then $M$ having no $x$-torsion together with the short exact sequence above implies that $\FF_{\mc{F}_{loc}}(M)$ has no $x$-torsion as well. If $M_1$ and $M_2$ have no $x$-torsion, then $\GG_{\mc{F}_{loc}}(M_1, M_2, *)$ also does not have it as a submodule of $M_1 \oplus M_2$. Summing it up, we obtain point (b). Points (c) and (a) follow in the similar way from lemmas \cite[tag 0BNN]{Stacks} and \cite[tag 0BP6]{Stacks}, respectively.
\end{proof}

Also we present the technical proofs omitted in paragraph \ref{par:technical}.
\begin{proof}[\textbf{Proof of proposition \ref{prop:g-injective}.}]
     Recall the notation of maps $\varphi$ and $\psi$ from definition \ref{def:car_sur}.
     
     (1) $\Rightarrow$ (2): Let $r \in R, \; g(r) = ag(x)$ for some $a \in R'$. Then $\psi((a, r/x)) = 0$, hence $(a, r/x) \in \im \varphi$. Therefore, $x \mid r$.
     
     (2) $\Rightarrow$ (1): Injectivity of $\varphi$ follows from injectivity of $R \to R_x$, so we verify $\ker \psi = \im \varphi$. Let $(r', r/x^k) \in \ker \psi$ for some $r' \in R', \; r \in R, \; k \ge 1$. Then $g(x)^k r' = g(r)$, hence $g(x) \mid g(r)$, and by injectivity of $\bar g$ \; $x \mid r$, therefore one can decrease $k$. So, we my assume $k = 0$, and then $(r', r) = (g(r), r) = \varphi(r)$.
\end{proof}

\begin{proof} [\textbf{Proof of proposition \ref{prop:g-iso}.}]
    (1) $\Rightarrow$ (2): Injectivity of $\bar g$ follows from the previous proposition, so we verify surjectivity. By surjectivity of the square for each $r' \in R'$ there exist $r'_1 \in R', \; r_2 \in R, \; m \ge 0$, such that
    $$
        \frac{r'}{g(x)} = r'_1 + \frac{r_2}{g(x)^m}.
    $$
    Multiplying by $g(x)$ gives
    $$
        r' - g(x) r'_1 = \frac{r_2}{g(x)^{m-1}}.
    $$
    LHS lies inside $R'$, RHS lies inside $g_x(R_x)$, thus the square being cartesian implies that both of them are inside $g(R)$. Considering the quotient by $g(x)$, we obtain that $\bar r' \in \im \bar g$.
    
    (2) $\Rightarrow$ (3): Induction on $k$, the base case for $k = 1$ is clear. Let $g(r) = g(x)^k r'$, then by induction hypothesis $r = x^{k-1} r_1$. Since $g(x)$ is not a zero divisor, $g(r_1) = g(x) r'$. Then injectivity of $\bar g$ implies that $x \mid r_1$, and hence $x^k \mid r$. So, $\bar g_k: R/(x^k) \to R'/(g(x))^k$ is injective.
    
    Now we verify surjectivity. By induction hypothesis for $r'$ it holds that $r' = g(x)^{k-1} r'_1 + g(r_1)$ $ \; (r'_1 \in R', r_1 \in R)$, and by induction hypothesis for $r'_1$ it holds that $r'_1 = g(x)^{k-1} r'_2 + g(r_2)$ $ \; (r'_2 \in R', r_2 \in R)$. Therefore,
    $$
        r' = g(x)^{2k-2} r'_2 + g(x^{k-1} r_2 + r_1),
    $$
    what gives us surjectivity for each $k \ge 2$.
    
    (3) $\Rightarrow$ (1): The square is cartesian by the previous proposition \ref{prop:g-injective}, so we verify its surjectivity. Indeed, by surjectivity of $\bar g_k$ for any $r' \in R'$ there exist $r'_1 \in R', \; r_2 \in R$, such that $r' = g(x)^k r'_1 + r_2$. Therefore
    $$
        \frac{r'}{g(x)^k} = r'_1 + \frac{r_2}{g(x)^k} \in \im \psi.
    $$
\end{proof}

\bigskip

\noindent{Alexandr Grebennikov\\
IMPA, Rio de Janeiro, Brazil}\\
Saint-Petersburg State University, Saint Petersburg, Russia \\
{\tt sagresash@yandex.ru}

\end{document}